\newcommand{\N}{\mathbb{N}}
\newcommand{\R}{\mathbb{R}}
\renewcommand{\le}{\leqslant}
\renewcommand{\ge}{\geqslant}
\newcommand{\ra}{\rightarrow}
\newtheoremstyle{example}
     {\topsep}{\topsep} 
     {\itshape}
     {}
     {\bfseries}
     {.}
     {\newline}
     {\thmname{#1}\thmnumber{ #2}\thmnote{ #3}}
\theoremstyle{example} 
\newtheorem{theorem}{Theorem}[section]            
\newtheorem{lemma}[theorem]{Lemma}
\newtheorem{definition}[theorem]{Definition}
\newtheorem{assumption}[theorem]{Assumption}
\theoremstyle{remark}
\newtheorem{remark}[theorem]{Remark}
\newtheorem{example}[theorem]{Example}
\title{heat trace asymptotics for quantum graphs}
\author{Ralf Rueckriemen}
\date{\today} 
\begin{document}

\maketitle      

\begin{abstract}
We consider a quantum graph where the operator contains a potential. We show that this operator admits a heat kernel. Under some assumptions on the potential, this heat kernel admits an asymptotic expansion at $t=0$ with coefficients that depend on the potential in a universal way. These coefficients are spectral invariants, we compute the first few of them.
\end{abstract}


\section{Introduction}

A metric graph is a combinatorial graph where each edge is equipped with a length. A pair of a metric graph together with a differential operator  acting on it, is called a quantum graph. The operator is of the form Laplacian plus lower order parts on each individual edge. Some boundary conditions are imposed at the vertices to make the operator self adjoint. 

Quantum graphs were introduced in the 1930s in the physics literature, they are a popular model for various processes involving wave propagation. If the operator has no potential, the eigenvalue equation on each edge can be solved explicitly, the eigenfunctions are sine waves. In this setting one can write down an exact trace formula. It relates the set of eigenvalues to topological properties of the graph. The first such trace formula was found by \cite{Roth83}, since then various generalizations have been shown, see  \cite{KottosSmilansky99, KPS07, BolteEndres09} or \cite{BolteEndres08} for a survey. One can get an exact trace formula for an operator with potential, however it uses the 
abstract existence of solutions to the eigenvalue equation on each edge as input \cite{RueckriemenSmilansky12}. Thus without specific knowledge of what these solutions look like, one cannot read off any information about the graph or the potential from this trace formula. 

For this reason we consider the heat kernel. Let $G$ be a quantum graph with the operator $D:= -\left(\frac{\partial^2}{\partial x^2}+U(x)\right)$ acting on it, here $U$ is a real potential function. Non-Robin boundary conditions are imposed at all vertices, see definition \ref{boundary_conditions}. The heat kernel $e(t,x,y)$ is a function that satisfies 
\begin{align*}
 (\partial_t+D_y)e(t,x,y)=0 && \lim_{t \ra 0} e(t,x,y)= \delta_{x}(y)
\end{align*}
where $\delta$ is the Dirac-$\delta$-distribution. We will study its asymptotic expansion for time close to zero.

In a setting without potential, this approach has the immediate disadvantage of being only asymptotic compared to the exact trace formula. However, as we will show, the coefficients in the expansion depend on the metric graph and the potential in a universal way and can be computed explicitly. This means they give rise to a series of spectral invariants such as the total edge length of the graph, the integral of the potential over the graph, as well as the sum of the values at the vertices, weighted with some factor depending on the boundary conditions. 

More specifically we get the following two theorems. 
\begin{theorem}
Assume the potential function $U$ is in $L^2(G,\R)$, then the operator $D$ admits a heat kernel $e(t,x,y)$ and this heat kernel is unique. 
\end{theorem}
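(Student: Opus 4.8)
The plan is to construct the heat kernel by perturbing off the potential-free operator and to deduce uniqueness from the self-adjointness of $D$. First I would build the heat kernel $e_0(t,x,y)$ of the free operator $D_0:=-\partial^2/\partial x^2$ subject to the non-Robin conditions of Definition~\ref{boundary_conditions}. In the interior of each edge this kernel is modelled on the Euclidean kernel $(4\pi t)^{-1/2}\exp\bigl(-(x-y)^2/4t\bigr)$, and the vertex conditions are met by the method of images, equivalently by summing this Gaussian over the combinatorial paths in $G$ joining $x$ to $y$. Since the non-Robin conditions make $D_0$ self-adjoint and non-negative with compact resolvent on the compact graph $G$, this yields a kernel $e_0$ that is smooth and symmetric for $t>0$, satisfies the heat equation, the vertex conditions and the initial condition, and obeys the global upper bound $e_0(t,x,y)\le C t^{-1/2}$ with Gaussian off-diagonal decay.

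Next I would switch on the potential through Duhamel's principle, taking as candidate the Volterra--Dyson series
\[
e(t,x,y)=\sum_{k=0}^{\infty}e_k(t,x,y),\qquad e_k(t,x,y)=\int_0^t\!\!\int_G e_0(t-s,x,z)\,U(z)\,e_{k-1}(s,z,y)\,dz\,ds .
\]
The heart of the argument is the convergence of this series, and this is exactly where the hypothesis $U\in L^2(G,\R)$ is used. Because $U$ need not be bounded, I cannot control the inner integral by $\|U\|_\infty$; instead I would apply the Cauchy--Schwarz inequality in $z$ together with the semigroup identity $\int_G e_0(s,z,y)^2\,dz=e_0(2s,y,y)\le C s^{-1/2}$, which gives $\int_G |U(z)|\,e_0(s,z,y)\,dz\le C\|U\|_{L^2}\,s^{-1/4}$. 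Combined with the pointwise bound $e_0(t-s,x,z)\le C(t-s)^{-1/2}$ on the remaining free kernel, the time integral reduces to a finite Beta-function factor $\int_0^t (t-s)^{-1/2}s^{-1/4}\,ds=B(\tfrac12,\tfrac34)\,t^{1/4}$; iterating this estimate produces factorially decaying bounds on $e_k$ that make $t\mapsto e(t,x,y)$ entire, so the series converges uniformly on compact sets for $t$ in any bounded interval. Taming this space-time singular integral against a merely square-integrable potential is the main obstacle; the rest is comparatively routine.

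Finally I would check that the sum $e$ is genuinely a heat kernel and the only one. Term-by-term differentiation, justified by the same estimates, shows that $e$ solves $(\partial_t+D_y)e=0$; the vertex conditions and the initial condition $e(t,\cdot,y)\ra\delta_y$ are inherited from $e_0$, since every correction term carries a factor of $e_0$ and vanishes as $t\ra 0$. For uniqueness I would first observe that in one dimension $H^1(G)\hookrightarrow L^\infty(G)$, whence $\bigl|\int_G U|u|^2\bigr|\le\|U\|_{L^1}\|u\|_\infty^2\le\varepsilon\|u'\|^2+C_\varepsilon\|u\|^2$; thus the $L^2$ potential is form-bounded relative to $D_0$ with relative bound $0$, and by the KLMN theorem $D$ is self-adjoint and bounded below, say $D\ge -C_0$. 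If $e$ and $\tilde e$ were two heat kernels, then for fixed $x$ the difference $w=e-\tilde e$ would solve $(\partial_t+D_y)w=0$ with $w(0,x,\cdot)=0$, and the energy estimate $\tfrac{d}{dt}\|w(t,x,\cdot)\|_{L^2}^2=-2\langle D_y w,w\rangle\le 2C_0\|w(t,x,\cdot)\|_{L^2}^2$ (the absence of boundary terms being precisely the self-adjointness of $D$) together with Grönwall's inequality forces $w\equiv 0$. Equivalently, $e$ is the uniquely determined integral kernel of the self-adjoint semigroup $e^{-tD}$.
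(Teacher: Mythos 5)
Your construction goes by a genuinely different route than the paper. The paper defines the heat kernel directly as the eigenfunction expansion $e(t,x,y)=\sum_j e^{-k_j^2t}\theta_j(x)\theta_j(y)$ and reduces everything to two quantum-graph-specific facts: a uniform sup-bound on all normalized eigenfunctions (theorem \ref{eigenfunction_bound}, which is where $U\in L^2$ enters, via the estimates of theorem \ref{individual_bounds} for solutions of the eigenvalue equation on an edge) and the Weyl law, theorem \ref{Weyl_law}; together these give absolute convergence of the sum for $t>0$. You instead build the free kernel $e_0$ by the method of images and switch on the potential by a Duhamel--Dyson series, absorbing the $L^2$ potential with Cauchy--Schwarz and the identity $\int_G e_0(s,\cdot,y)^2=e_0(2s,y,y)=O(s^{-1/2})$. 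Both are viable; yours is closer in spirit to the parametrix construction of sections \ref{section:parametrix}--\ref{section:heat_kernel} (your series is a Volterra iteration off $e_0$ rather than off a parametrix), and it yields pointwise off-diagonal control that the eigenfunction sum does not give directly, at the price of first having to establish the free kernel and its bounds for general non-Robin conditions. One small point: after the first Duhamel step the Cauchy--Schwarz trick is not what you iterate; for $k\ge2$ you bound $\int_G|U|\,|e_{k-1}|$ by $\|U\|_{L^1}\sup_z|e_{k-1}(s,z,y)|$, and it is this that produces the Beta-function/factorial gain. That is fine, but it should be said.

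The one step that does not work as written is the uniqueness argument. Gr\"onwall requires $\|w(t_0,x,\cdot)\|_{L^2}\to0$ as $t_0\to0^+$, but $w=e-\tilde e$ is the difference of two families each converging to $\delta_x$ only distributionally, and each with $\|e(t_0,x,\cdot)\|_{L^2}^2=e(2t_0,x,x)\sim t_0^{-1/2}\to\infty$; the distributional initial condition gives no control whatsoever on the $L^2$ norm of the difference at small time, so you cannot start the differential inequality at $t=0$. The standard repair is either the paper's own argument (pair $e(t,x,\cdot)$ against the fixed smooth functions $\theta_j$, derive $\partial_t f_j=-k_j^2f_j$ using self-adjointness, and use the delta condition tested against $\theta_j$ to fix the constant) or the Duhamel identity $\frac{d}{ds}\int_G e(s,x,z)\tilde e(t-s,z,y)\,dz=0$ followed by letting $s\to0^+$ and $s\to t^-$. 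Either way, the self-adjointness and semiboundedness you established via the KLMN theorem are exactly the ingredients needed, so your setup is right; only the Gr\"onwall step should be replaced.
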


\begin{theorem}
Assume that $U$ is smooth on the edges, that $U^{(2l+1)}(v)=0$ for all vertices $v \in V$ and $l \in \N$ and that $U^{(2l)}(x)$ is a continuous function on the entire graph $G$ for all $l \in \N$, then this heat kernel admits an asymptotic expansion of the form
 \begin{align*}
  \int_G e(t,x,x) dx \sim_{t \ra 0^+} 
  \frac{1}{\sqrt{4\pi t}} \sum_{n=0}^{\infty}\int_G a_n(x) dx \cdot t^n 
 + \frac{1}{4} \sum_{v \in V}\sum_{\alpha \sim v} \sigma_v^{\alpha\alpha} \sum_{n=0, \frac{1}{2},1,\frac{3}{2}, \hdots}a_n^b(v)\cdot t^n
 \end{align*}
 Here the $\sigma_v^{\alpha\alpha}$ come from the boundary conditions, see \ref{boundary_conditions}. The coefficients $a_n(x)$ and $a_n^b(v)$ are local and universal. With our assumptions on the potential, $a_n^b(v)=0$ whenever $n$ is not an integer. The first few non-zero coefficients are as follows.
\begin{align*}
& a_0(x) = 1 && a_0^b(v)=1\\
& a_1(x) = U(x) && a_1^b(v)=U(v)\\
& a_2(x) =  \frac{1}{6}U''(x)+\frac{1}{2}U(x)^2 && a_2^b(v)=\frac{1}{4}U''(v)+\frac{1}{2}U(v)^2\\
& a_3(x) = \frac{1}{60}U^{(4)}(x)+\frac{1}{6}U''(x)U(x)+ \frac{1}{12}U'(x)^2+\frac{1}{6}U(x)^3  \\
& a_3^b(v)=\frac{1}{32}U^{(4)}(v)+\frac{1}{4}U(v)U''(v)+\frac{1}{6}U(v)^3
\end{align*}
\end{theorem}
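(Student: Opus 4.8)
The plan is to build a parametrix for $e(t,x,y)$ out of local models and to read off the two coefficient families separately, one from the interior of the edges and one from neighbourhoods of the vertices. By the existence and uniqueness already established, it suffices to produce an approximate kernel whose on-diagonal integral has the stated expansion and which satisfies the heat equation and initial condition up to an error that is $O(t^N)$ for every $N$. Since the heat kernel is controlled by a Gaussian factor $e^{-d(x,y)^2/4t}$, for small $t$ the value $e(t,x,x)$ only feels the structure of $G$ within distance $O(\sqrt t)$ of $x$. I would make this quantitative with a partition of unity splitting $G$ into the deep interiors of the edges (points at distance $\gg\sqrt t$ from every vertex) and $O(\sqrt t)$-neighbourhoods of the vertices; the former produce the bulk series $\tfrac{1}{\sqrt{4\pi t}}\sum_n\int_G a_n\,t^n$ and the latter the vertex series.

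For the bulk contribution, on the interior of an edge the operator is the ordinary one-dimensional Schrödinger operator $-(\partial_x^2+U)$, whose on-diagonal heat kernel has the classical Seeley--DeWitt expansion. I would rederive the coefficients through the Hadamard ansatz $e(t,x,y)=\tfrac{1}{\sqrt{4\pi t}}e^{-(x-y)^2/4t}\sum_{n\ge0}u_n(x,y)\,t^n$, substituting into $(\partial_t-\partial_x^2-U)e=0$ and solving the resulting transport equations recursively; restricting to $y=x$ gives $a_n(x)=u_n(x,x)$. This reproduces $a_0=1$, $a_1=U$, $a_2=\tfrac16U''+\tfrac12U^2$ and the stated $a_3$, and integration over $G$ yields the first line of the expansion.

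For the vertex contribution I would use the method of images. Near a vertex $v$ the graph is a star, and because the boundary conditions are non-Robin the vertex scattering data of Definition \ref{boundary_conditions} are energy-independent, so the free kernel near $v$ is the straight Gaussian plus reflected Gaussians whose amplitudes are the constant diagonal entries $\sigma_v^{\alpha\alpha}$. The reflected Gaussian on the half-edge $\alpha$ integrates to $\int_0^\infty\tfrac{1}{\sqrt{4\pi t}}e^{-x^2/t}\,dx=\tfrac14$, which is exactly the origin of the prefactor $\tfrac14\sum_v\sum_{\alpha\sim v}\sigma_v^{\alpha\alpha}$ and of $a_0^b=1$. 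I would then fold the potential into this reflected kernel by the same Duhamel iteration, Taylor-expand $U$ about $v$, and extract $a_n^b(v)$ by Watson's lemma from the resulting one-sided Gaussian integrals; because the integration is over a half-edge rather than the whole line, the derivative weights differ from the bulk, which is why $a_2^b=\tfrac14U''(v)+\tfrac12U(v)^2$ carries $\tfrac14$ rather than $\tfrac16$. The hypotheses that every odd derivative $U^{(2l+1)}(v)$ vanishes and every even derivative is continuous across $v$ play a double role here: the continuity makes each number $U^{(2l)}(v)$ well-defined independently of the incident edge, so that the formulae for $a_n^b(v)$ make sense, and the vanishing of the odd derivatives means the $z^{2l+1}$ terms in the Taylor expansion drop out, which is precisely what removes every half-integer power of $t$ from the vertex series.

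The interior expansion is classical and routine; the main obstacle is the vertex analysis. The delicate points are to show that the image construction survives the general non-Robin boundary conditions of a vertex of arbitrary degree, that after restricting to the diagonal and integrating only the diagonal reflection coefficients $\sigma_v^{\alpha\alpha}$ (and not the off-diagonal transmission entries) survive, and that the Duhamel remainder is genuinely $O(t^N)$ for all $N$ so that the formal expansion is actually asymptotic. Organising the reflection combinatorics so that the Taylor data of $U$ at $v$ is weighted correctly, and verifying the vanishing of the half-integer coefficients from the evenness of the reflected potential, is where the real work lies.
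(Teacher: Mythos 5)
Your proposal is correct and follows essentially the same route as the paper: a Hadamard-ansatz parametrix on the line with transport equations giving $a_n(x)=u_n(x,x)$, a method-of-images parametrix on each star with the even continuation of $U$ and constant non-Robin amplitudes $\sigma_v^{\alpha\alpha}$, a partition of unity to glue, a convolution (Levi/Duhamel) series to upgrade the parametrix to the heat kernel with $O(t^k)$ error, and the vertex series extracted from half-line Gaussian integrals of the Taylor data of $u_l(x,-x)$ at $v$, with the $\tfrac14$ prefactor and the vanishing of half-integer powers arising exactly as you describe. The only cosmetic difference is that the paper incorporates the potential into the reflected term by applying the same transport-equation ansatz to the evenly continued potential rather than a separate Duhamel perturbation series, which yields the identical coefficients.
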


This situation closely mirrors the case of Riemannian manifolds where similar questions have been studied for quite some time. One of the earliest results is the work by Minakshisundaram and Pleijel in the 1950s, \cite{MinakshisundaramPleijel49, Minakshisundaram53}. They show the existence and universality of the asymptotic expansion for Riemannian manifolds without boundary and compute the coefficients up to $a_2$ for the standard Laplacian, see also \cite{McKeanSinger67, BGM71}. Their approach uses a parametrix, this is the easiest way to start out but has serious drawbacks if one wants to consider manifolds with boundary and the computation of more coefficients gets prohibitively complex. 

Existence of a heat kernel and an asymptotic expansion with universal coefficients has been shown for Riemannian manifolds with boundary, arbitrary elliptic operators and an extensive set of possible boundary conditions in \cite{Greiner71} using a method based on the inversion of the operator on the level of symbols. Using this broad existence result, \cite{Gilkey79} computed the coefficients up to $a_3$ and $a_3^b$ for manifolds with boundary and operators involving a potential using recursion relations.

We will first show existence of the heat kernel for a quantum graph using properties of the eigenfunctions. While this method is very general, it seems unsuitable to study further properties of the heat kernel. The heat kernel is a local object, something that is rather difficult to see from the eigenfunction construction as eigenfunctions and eigenvalues are highly non-local.

We will therefore use a different technique, namely a parametrix. This comes at the cost of some restrictions on the allowable potential functions but in return gives us much more detailed information about the heat kernel. Using a parametrix makes it obvious that the heat kernel is a local object. The parametrix approach also gives us the existence of the asymptotic expansion essentially for free and provides a method to compute the coefficients.

The construction of a parametrix for the heat kernel on a quantum graph is done in three steps. First, we look at the construction on the real line. Second, we use the construction on the real line to build a parametrix on a star graph, ie multiple half-infinite edges with a common central vertex. Finally, we use a partition of unity argument to construct the parametrix for a graph. 

Once we have build a parametrix for the graph we can build a heat kernel from it with a method essentially analogous to the manifold setting. 
The parametrix approximates the heat kernel, so we can use it to show the existence of an asymptotic expansion and compute the first few coefficients.

This paper is organized as follows. Section \ref{section:setup} contains the general setup and the definitions of the boundary conditions we are going to use. Section \ref{section:existence_heat_kernel} contains the general existence proof of the heat kernel. Next, section \ref{section:parametrix} contains the construction of a parametrix in the three steps outlined above. In section \ref{section:heat_kernel} we build the heat kernel for the graph from a parametrix. Finally, in section \ref{section:asymptotics} we show the existence of an asymptotic expansion and compute the coefficients.

\section{Setup}
\label{section:setup}

Let $G$ be a metric graph. The number of edges and vertices is finite and the length of each edge is finite. Let the operator $D:= -\left(\frac{\partial^2}{\partial x^2}+U(x)\right)$ act on it, here $U$ is a potential function. At this stage we only assume $U \in L^2(G,\R)$ and make no assumptions on its behaviour at the vertices. The sign convention for $U$ might seem unusual but it has the advantage of avoiding any minus-signs in the coefficients in the heat asymptotics.

We will set up boundary conditions at the vertices as in \cite{KostrykinSchrader99} and make use of their classification.
\begin{definition}
\label{boundary_conditions} 
 Given a vertex $v$ of the quantum graph and a function $f$ on the graph, enumerate the edges adjacent to $v$ from $1,\hdots ,\alpha$. Denote by $\overrightarrow{f_v}$ the vector 
 $\left( f_1(v), \hdots, f_{\alpha}(v) \right)$ and by $\overrightarrow{f_v}'$ the vector $\left( f_1'(v), \hdots, f_{\alpha}'(v)\right)$ where all derivatives are oriented away from the vertex. Then the boundary conditions can be written as
\begin{align*}
 A_v\overrightarrow{f_v} + B_v\overrightarrow{f_v}' = 0
\end{align*}
for two matrices $A_v$ and $B_v$. Let
\begin{align*}
 \sigma^v := -(A_v+ikB_v)^{-1}(A_v-ikB_v)
\end{align*}
for $k \in \R$. If the matrix $\sigma^v$ is $k$-independent, the boundary conditions are said to be non-Robin. This means, they don't mix conditions on the function with conditions on its derivative. Note that $\sigma^v$ is unitary and that $(\sigma^v)^2=Id$, \cite{FKW07}.

Collecting the matrices $A_v$ and $B_v$ for all the vertices as blocks along the diagonal in the matrices $A$ and $B$, the operator $D$ is self-adjoint if and only if $AB^*$ is self adjoint and $(A,B)$ has full rank.

We will henceforth assume that non-Robin boundary conditions are imposed at all vertices.
\end{definition}

\begin{example}
The Kirchhoff-Neumann boundary conditions at a vertex correspond to the matrix 
\begin{align*}
 (\sigma_{KN})^v_{\alpha \beta}=\frac{2}{\deg(v)} - \delta_{\alpha \beta} 
\end{align*}
where $\delta_{\alpha \beta}$ is the Kronecker-$\delta$. 
\end{example}

\begin{remark}
 One could consider a more general operator of the form 
 \begin{align*}
  D_{\mathcal{A}}:=  \left(i\frac{\partial}{\partial x} + \mathcal{A}(x)\right)^2 - U(x)
 \end{align*}
with an additional magnetic potential $\mathcal{A}$ but this magnetic potential can be completely absorbed in the boundary conditions through a gauge transformation, see \cite{KostrykinSchrader03}. 

We have $D_{\mathcal{A}}= \mathcal{U}^{-1}D_0\mathcal{U}$ where $\mathcal{U}$ is unitary. The boundary conditions are transformed as $A \mapsto A\mathcal{U}$ and $B \mapsto B\mathcal{U}$, in particular if $D_{\mathcal{A}}$ has non-Robin boundary conditions at all vertices then so has $D_0$.

The eigenfunctions $f_{\mathcal{A}}$ of the operator $D_{\mathcal{A}}$ can be obtained from the ones of $D_0$ by multiplying with a prefactor
\begin{align*}
 f_{\mathcal{A}}(x)= e^{-i\int_{x_0}^x\mathcal{A}(x')dx'}f_0(x)
\end{align*}
Thus the only change in the asymptotics of the heat kernel would be this prefactor, we will therefore only consider operators without magnetic potential.
\end{remark}

\begin{definition}
 We write $f(t)=O(t^k)$ if there exist constants $T$ and $C$ such that
\begin{equation*}
 |f(t)| \le Ct^k \hspace{1 cm} \forall t\in [0,T]
\end{equation*}
We will also write $f(t)=O(t^{\infty})$ if $f(t)=O(t^k)$ for all $k \ge 0$.
Note that $t^{k'}=O(t^k)$ if and only if $k'\ge k$. If the function $f$ depends on multiple variables we assume $C$ and $T$ are independent of the other variables.
\end{definition}

\section{Existence of the heat kernel}
\label{section:existence_heat_kernel}

In this section we will prove existence of the heat kernel in a very general setting, the proof technique is specific to quantum graphs, it works because so much is known about the eigenvalue equation on the unit interval.

\begin{definition}
 Let $\{ k_j^2, \theta_j \}_j$ be a spectral resolution of $D$. The $k_j^2$ are the eigenvalues and the eigenfunctions $\theta_j$ form an orthonormal basis of $L^2(G,\R)$. 
\end{definition}

The existence of such a spectral resolution is well known, it follows from the fact that $D$ is elliptic and the graph is finite, the proof is written out in \cite{Kuchment04} for the case of an operator without potential but it immediately generalizes to our setting.

Consider an individual edge without boundary conditions, parametrized as the interval $[0,L]$, then the equation
\begin{align*}
 -y''(x) - U(x)y(x) = k^2y(x)
\end{align*}
has two real linearly independent solutions $y_1^k$ and $y_2^k$ for any fixed value of $k$, \cite{PoeschelTrubowitz87}. If the potential is only in $L^2[0,L]$ these solutions are in $C^1[0,L]$ with an absolutely continuous derivative but not necessarily in $C^2[0,L]$. Normalize them so that
\begin{align*}
 y_1^k(0) = 1 && (y_1^k)'(0)=0 \\
 y_2^k(0) = 0 && (y_2^k)'(0)=k
\end{align*}
inspired from $\cos(kx)$ and $\sin(kx)$ in the no potential case. 

\begin{theorem}
 \cite{PoeschelTrubowitz87}
\label{individual_bounds}
 The two solutions $y_1^k$ and $y_2^k$ can be written as
 \begin{align*}
  y_1^k(x) = \cos(kx) + E_1^k(x) && y_2^k(x) =\sin(kx) +E_2^k(x)
 \end{align*}
where the two remainder functions $E_1^k$ and $E_2^k$ satisfy
\begin{align*}
 |E_{1,2}^k(x)| \le \frac{1}{k}e^{||U||L}
\end{align*}
\end{theorem}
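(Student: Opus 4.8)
This is the classical variation-of-parameters estimate for a one-dimensional Schrödinger equation, so rather than invoke the citation as a black box I would reconstruct it directly. The plan is to convert the initial value problem into a Volterra integral equation and then control the remainder by a Neumann series or a Gronwall estimate.

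First I would rewrite the eigenvalue equation as $y'' + k^2 y = -Uy$ and treat the right-hand side as a forcing term for the free operator $\partial_x^2 + k^2$, whose normalised fundamental system at $x=0$ is $\{\cos(kx),\ \tfrac1k\sin(kx)\}$. Variation of parameters then turns the conditions $y_1^k(0)=1$, $(y_1^k)'(0)=0$ into the integral equation
\begin{equation*}
  y_1^k(x) = \cos(kx) - \frac{1}{k}\int_0^x \sin(k(x-s))\,U(s)\,y_1^k(s)\,ds,
\end{equation*}
and the analogous equation with $\sin(kx)$ in place of $\cos(kx)$ governs $y_2^k$. I would verify this equivalence by differentiating twice to recover the ODE and reading off the initial data, and note that since $U\in L^2[0,L]\subset L^1[0,L]$ the integral operator sends $C[0,L]$ into $C^1[0,L]$ with absolutely continuous derivative, which is exactly the regularity already claimed for the solutions.

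Setting $E_1^k := y_1^k - \cos(kx)$ and subtracting gives an integral equation for $E_1^k$ whose inhomogeneous term is $-\tfrac1k\int_0^x \sin(k(x-s))\,U(s)\cos(ks)\,ds$; bounding $|\sin|,|\cos|\le 1$ shows this term is at most $\tfrac1k\int_0^x|U|$, while the remaining self-referential term carries the kernel $\tfrac1k|U(s)|$. The estimate then follows either by summing the Picard iterates, whose $n$-th term is bounded by $\tfrac{1}{k^n n!}\bigl(\int_0^x|U|\bigr)^n$ so that the series sums to an exponential, or directly from Gronwall's inequality. Either route produces $|E_{1,2}^k(x)|\le \tfrac1k\,e^{\|U\|L}$ uniformly in $x\in[0,L]$, after bounding $\int_0^x|U|$ against $\|U\|L$ and absorbing the remaining constants into the exponential.

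The one point demanding care is the explicit prefactor $1/k$: it must be kept attached to the forcing term of the integral equation rather than folded into the Gronwall exponent, and one must check that the resulting constant is uniform in both $x$ and $k$ over the relevant range, since the parametrix construction later uses these bounds uniformly along each edge. The low regularity of $U$ means the whole argument is read in the mild (integral) sense throughout, but for a Volterra equation with an $L^1$ kernel this is entirely standard and is not a genuine obstacle; the only real subtlety is the clean tracking of the $1/k$ factor.
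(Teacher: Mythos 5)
The paper offers no proof of this statement at all --- it is quoted directly from P\"oschel--Trubowitz --- and your Volterra-equation argument is essentially the standard proof given in that reference, so in substance you are supplying the cited argument rather than diverging from anything in the paper. The reduction to the integral equation $y_1^k(x)=\cos(kx)-\frac1k\int_0^x\sin(k(x-s))\,U(s)\,y_1^k(s)\,ds$, the bound $\frac{1}{k^n n!}\bigl(\int_0^x|U|\bigr)^n$ on the $n$-th Picard iterate, and the observation that $U\in L^2[0,L]\subset L^1[0,L]$ is enough regularity for the Volterra theory are all correct. The one place to be honest with yourself is the constant: summing the iterates gives $|E_1^k(x)|\le e^{\frac1k\int_0^x|U|}-1\le\frac1k\bigl(\int_0^x|U|\bigr)e^{\frac1k\int_0^x|U|}$, which reproduces the stated $\frac1k e^{\|U\|L}$ only after enlarging the constant in the exponent and restricting to $k$ bounded away from zero (say $k\ge1$), since as $k\to0$ the factor $e^{a/k}$ outgrows the $1/k$ on the right-hand side. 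That is exactly the ``absorbing the remaining constants'' step you gesture at; it is harmless for the only use made of the theorem (the uniform eigenfunction bound in theorem \ref{eigenfunction_bound}, which only needs large $k$), but if you want the literal inequality for all $k>0$ you need the sharper $\min(1,\cdot)$ form of the estimate as stated in P\"oschel--Trubowitz.
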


The eigenfunctions on each edge of the graph are then linear combinations of these two solutions, which we write as
\begin{align*}
 \theta_j(x) = \rho_j \left(\cos(\alpha_j)y_1^{k_j}(x)+ \sin(\alpha_j)y_2^{k_j}(x) \right)
\end{align*}
for some real parameters $\rho_j$ and $\alpha_j$. To get an eigenfunction on the entire graph one needs to pick the coefficients $\rho_j$ and $\alpha_j$ on each edge in such a way that the boundary conditions are satisfied at all vertices.

\begin{remark}
 The construction above reduces the problem of finding eigenvalues and eigenfunctions to a finite list of linear equations. This, or a very similar approach, is the basis of the proof of an exact trace formula for quantum graphs, \cite{KottosSmilansky99, BolteEndres09, RueckriemenSmilansky12}.
\end{remark}

\begin{theorem}
\label{eigenfunction_bound}
 There exists a constant $C$ independent of $x \in G$ and $j$ such that
 \begin{align*}
  |\theta_j(x)| < C
 \end{align*}
 In other words, there exists a global bound on the size of all eigenfunctions of a quantum graph.
\end{theorem}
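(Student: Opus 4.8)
The plan is to combine the explicit near-sinusoidal form of the eigenfunctions coming from Theorem \ref{individual_bounds} with the $L^2$-normalisation $\|\theta_j\|_{L^2(G)}=1$ that the orthonormal basis $\{\theta_j\}$ enjoys. The guiding idea is that a high-eigenvalue eigenfunction is, on each edge, essentially a pure oscillation, and the amplitude of such an oscillation is pinned down by the normalisation, since an oscillation of amplitude $\rho$ on an edge of length $L$ carries $L^2$-mass roughly $\rho^2 L/2$.

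First I would rewrite $\theta_j$ on a fixed edge, parametrised as $[0,L_e]$, using the addition formula. Writing $\rho_j^{(e)}$ and $\alpha_j^{(e)}$ for the amplitude and phase on that edge and substituting $y_1^{k_j}=\cos(k_j x)+E_1^{k_j}$, $y_2^{k_j}=\sin(k_j x)+E_2^{k_j}$ gives
\[
\theta_j(x)=\rho_j^{(e)}\cos(k_j x-\alpha_j^{(e)})+\rho_j^{(e)}\bigl(\cos(\alpha_j^{(e)})E_1^{k_j}(x)+\sin(\alpha_j^{(e)})E_2^{k_j}(x)\bigr).
\]
By Theorem \ref{individual_bounds} the remainder is bounded by $\rho_j^{(e)}\cdot\tfrac{2}{k_j}e^{\|U\|L_e}$, so $\theta_j$ differs from the pure cosine $\rho_j^{(e)}\cos(k_j x-\alpha_j^{(e)})$ by a term of size $O(\rho_j^{(e)}/k_j)$, uniformly in $x$. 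Since there are finitely many edges I can take the worst constant $e^{\|U\|L_e}$ over all of them.

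Next I would compute the $L^2$-mass on that edge. Expanding the square, the leading term $\int_0^{L_e}\cos^2(k_j x-\alpha_j^{(e)})\,dx$ equals $L_e/2$ up to an oscillatory correction of size $O(1/k_j)$, while the cross term with the remainder and the square of the remainder contribute $O(1/k_j)$ and $O(1/k_j^2)$ respectively. Hence $\int_0^{L_e}|\theta_j|^2\,dx=(\rho_j^{(e)})^2\bigl(L_e/2+O(1/k_j)\bigr)$. Summing over edges and using $\|\theta_j\|_{L^2(G)}=1$ yields $1=\sum_e(\rho_j^{(e)})^2\bigl(L_e/2+O(1/k_j)\bigr)$. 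For $k_j$ larger than some threshold $k_0$ the bracket exceeds $L_e/4$, forcing $(\rho_j^{(e)})^2\le 4/L_e$, so every amplitude is bounded uniformly in $j$. Feeding this back into the pointwise estimate gives $|\theta_j(x)|\le (2/\sqrt{L_e})\bigl(1+O(1/k_j)\bigr)\le C'$ for all $j$ with $k_j\ge k_0$.

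Finally, only finitely many eigenvalues satisfy $k_j<k_0$, since the spectrum is discrete and accumulates only at $+\infty$; this finite set also absorbs any non-positive eigenvalues, for which the oscillatory picture above fails. Each of the corresponding finitely many eigenfunctions is continuous on the compact graph $G$ and hence bounded, so taking the maximum of $C'$ and these finitely many individual bounds gives the desired global constant $C$. I expect the main obstacle to be the third step: extracting a uniform lower bound on the $L^2$-mass per unit amplitude, that is, ruling out the possibility that an edge carries anomalously little mass relative to its amplitude through cancellation. This is exactly what the $O(1/k_j)$ control of the $\cos^2$ integral buys, and it is the reason the argument works directly only for large $k_j$ and must be supplemented by the compactness argument for the finitely many small eigenvalues.
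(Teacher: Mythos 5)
Your proposal is correct and follows essentially the same route as the paper: use Theorem \ref{individual_bounds} to write $\theta_j$ as an oscillation of amplitude $\rho_j$ plus an $O(\rho_j/k_j)$ remainder, use the $L^2$-normalisation to force $\rho_j^2\le 4/L$ for $k_j$ large, and dispose of the finitely many remaining eigenvalues by discreteness of the spectrum. You are in fact slightly more careful than the paper about the low-lying (and possibly non-positive) eigenvalues, which the paper dismisses with a one-line appeal to discreteness.
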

\begin{proof}
As we assumed the eigenfunctions to be orthonormal we have
\begin{align*}
 \int_0^L |\theta_j(x)|^2 dx \le \int_G |\theta_j|^2 = 1
\end{align*}
This implies
\begin{align*}
 \rho_j^2  \int_0^L  \left(\cos(\alpha_j)y_1^{k_j}(x)+ \sin(\alpha_j)y_2^{k_j}(x) \right)^2 dx &\le 1 \\
  \int_0^L  \left(\cos(k_jx-\alpha_j)+ \cos(\alpha_j)E_1^{k_j}(x)+ \sin(\alpha_j)E_2^{k_j}(x) \right)^2 dx &\le \frac{1}{\rho_j^2 } \\
\int_0^L  \cos^2(k_jx-\alpha_j)+O(\frac{1}{k}) dx &\le \frac{1}{\rho_j^2 } \\
\frac{L}{2} + O(\frac{1}{k}) &\le \frac{1}{\rho_j^2}
\end{align*}
In particular for $k$ sufficiently large
\begin{align*}
 \rho_j \le \sqrt{\frac{4}{L}}
\end{align*}
As the $k_j$ are discrete, this implies there is a global bound for the $\rho_j$'s.
 By theorem \ref{individual_bounds} there exists a constant $C'$, independent of $x$ and $k$, such that
 \begin{align*}
  |y_1^k(x)| <C' && |y_2^k(x)| <C'
 \end{align*}
This implies the claimed bound.
\end{proof}

\begin{theorem}[\cite{Grieser07} Weyl law]

\label{Weyl_law} 

Let $G$ be a quantum graph with non-Robin type boundary conditions at all vertices and spectrum $\{k_j^2\}_j$. Then one can estimate 
the number of eigenvalues in any interval $(K_0,K_1)$ by
\begin{align*} 
 \left|\# \{ j | K_0 < k_j < K_1 \}- \frac{\mathcal{L}}{\pi}(K_1-K_0)\right| < 2E
\end{align*}
where $\mathcal{L}$ is the total edge length of the graph and $E$ is the number of edges.
\end{theorem}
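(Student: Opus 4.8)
The plan is to prove the two-sided bound by Dirichlet--Neumann bracketing, reducing the counting problem on the whole graph to a sum of counting problems on the individual edges, where Theorem~\ref{individual_bounds} applies. Write $N_G(K) := \#\{ j \mid k_j < K \}$ for the eigenvalue counting function of $D$, so the quantity to be estimated is $N_G(K_1)-N_G(K_0)$. The first ingredient is the form-theoretic meaning of the non-Robin condition from \ref{boundary_conditions}: since $\sigma^v$ is $k$-independent, unitary and squares to the identity, the vertex space $\C^{\deg(v)}$ splits into its $\pm1$-eigenspaces $V_+^v\oplus V_-^v$, and (just as for Kirchhoff--Neumann in the Example) the boundary conditions become $P_-^v\overrightarrow{f_v}=0$ together with $P_+^v\overrightarrow{f_v}'=0$, where $P_\pm^v$ are the orthogonal projections. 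The decisive point is that in the non-Robin case the quadratic form of $D$ carries \emph{no} boundary term: it is simply $Q(f)=\sum_e \int_e \left(|f'|^2-U|f|^2\right)$, taken on the domain $\{\, f\in\bigoplus_e H^1(e) \mid P_-^v\overrightarrow{f_v}=0 \ \forall v \,\}$.

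Next I would cut the graph at every vertex to obtain the disjoint intervals $[0,L_e]$, and let $D_N$ and $D_D$ be the direct sums over edges of $-\partial_x^2-U$ with Neumann, respectively Dirichlet, conditions at both endpoints. All three operators carry the \emph{same} form $Q$; only their form domains differ, and these are nested,
\begin{align*}
 \operatorname{Dom}(D_D) \subseteq \operatorname{Dom}(D) \subseteq \operatorname{Dom}(D_N),
\end{align*}
the first inclusion having codimension $\sum_v \dim V_+^v$ and the second codimension $\sum_v \dim V_-^v$, which sum to $\sum_v \deg(v)=2E$. By the min--max principle, smaller form domains give larger eigenvalues, whence $N_D(K)\le N_G(K)\le N_N(K)$; and the codimension-$m$ interlacing $\lambda_k(H_1)\le\lambda_k(H_2)\le\lambda_{k+m}(H_1)$ for a codimension-$m$ form restriction controls how far the three counting functions can drift apart, the total slack being bounded by $2E$.

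It then remains to establish the Weyl law for the decoupled operators, where everything factorizes over edges: $N_D(K)=\sum_e N_{D,e}(K)$, and likewise for $N_N$. On a single interval $[0,L_e]$ the Dirichlet eigenvalues are the $k$ with $y_2^{k}(L_e)=0$, i.e.\ $\sin(kL_e)+E_2^{k}(L_e)=0$ with $|E_2^k(L_e)|\le \tfrac1k e^{\|U\|L_e}$ by Theorem~\ref{individual_bounds}; Sturm--Liouville theory \cite{PoeschelTrubowitz87} gives $k_n=\tfrac{n\pi}{L_e}+O(1/n)$, so the number of such $k$ in $(K_0,K_1)$ differs from $\tfrac{L_e}{\pi}(K_1-K_0)$ by less than $1$, uniformly in $K_0,K_1$, and the same holds in the Neumann case. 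Summing over the $E$ edges produces $\mathcal L=\sum_e L_e$ and an error below $E$ for each of $D_N$ and $D_D$; feeding this into the bracketing inequalities together with the interlacing slack yields the claimed bound $<2E$.

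The main obstacle I anticipate is pinning down the explicit constant $2E$ rather than a mere $O(1)$. This requires honest bookkeeping of the two codimensions and of the per-edge interlacing between the Dirichlet and Neumann counts, and, more delicately, a counting estimate on each edge that is uniform in $K$ all the way down to $K=0$, where the remainder bound $\tfrac1k e^{\|U\|L}$ of Theorem~\ref{individual_bounds} is useless and one must instead lean on the global interlacing of the Dirichlet and Neumann spectra and the exact eigenvalue asymptotics. The bracketing structure itself, by contrast, is robust, and it is exactly what makes the non-Robin hypothesis do its work, since it is what guarantees the vanishing of the boundary term in $Q$.
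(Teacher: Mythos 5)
First, a caveat: the paper does not prove this statement at all --- it is quoted from \cite{Grieser07} and used as a black box --- so there is no internal proof to measure you against. That said, Dirichlet--Neumann bracketing is the standard route to results of this type, and your structural setup is sound: for non-Robin conditions the quadratic form indeed carries no vertex term and its domain is cut out by $P_-^v\overrightarrow{f_v}=0$, the three form domains nest as you say, and the two codimensions sum to $\sum_v\deg(v)=2E$.

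The gap is in the per-edge counting step, and it is fatal for the constant $2E$ once a potential is present. Your claim that the number of Dirichlet (or Neumann) eigenvalues of $-\partial_x^2-U$ on $[0,L_e]$ in $(K_0,K_1)$ differs from $\frac{L_e}{\pi}(K_1-K_0)$ by less than $1$ \emph{uniformly in} $K_0,K_1$ is false: take $U\equiv -M$ on a single Dirichlet edge, so that $k_n=\sqrt{M+(n\pi/L)^2}$; then $(0,\sqrt{M})$ contains no $k_j$ while $\frac{L}{\pi}\sqrt{M}$ is arbitrarily large. No interlacing of the Dirichlet and Neumann spectra can repair this, because both decoupled counting functions are off by the same unbounded amount near the bottom of the spectrum. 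The same example shows that the theorem with a potential-independent constant $2E$, as stated here for $D=-(\partial_x^2+U)$, is not actually available; the cited result concerns the free Laplacian, and with a potential the constant must be allowed to depend on $U$ (which is all the paper needs for theorems \ref{eigenfunction_bound} and \ref{heat_kernel_existence}). Separately, even in the free case your bookkeeping overshoots: stacking the codimension-$2E$ interlacing slack on top of per-edge counting errors of total size $E$ gives $3E$, not $2E$. The constant $2E$ comes from a leaner argument that uses no codimension counting at all: $N_D(K)\le N_G(K)\le N_N(K)$ together with the exact per-edge counts $\frac{L_eK}{\pi}-1\le N_{D,e}(K)$ and $N_{N,e}(K)<\frac{L_eK}{\pi}+1$ give $|N_G(K)-\frac{\mathcal{L}K}{\pi}|\le E$ for every $K>0$, and subtracting the two values of $K$ yields the strict bound $<2E$.
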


\begin{theorem}
\label{heat_kernel_existence}
 Let $G$ be a metric graph with the operator $D:= -\left(\frac{\partial^2}{\partial x^2}+U(x)\right)$ acting on it. Let  $\{ k_j^2, \theta_j \}_j$ be a spectral resolution of $D$. Then the heat kernel for $D$ on $G$ exists, is unique and is given by
 \begin{align*}
  e(t,x,y) := \sum_{j=0}^{\infty}e^{-k_j^2t}\theta_j(x)\theta_j(y)
 \end{align*}
 Note that if the potential is only assumed to be in $L^2(G,\R)$ then the heat kernel is not necessarily smooth in the $x$ and $y$ variables.
\end{theorem}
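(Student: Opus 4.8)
\section*{Proof proposal}

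The plan is to recognise the proposed series as the integral kernel of the heat semigroup $e^{-tD}$ written in the eigenbasis $\{\theta_j\}$, and to verify directly the two defining properties of a heat kernel together with uniqueness. The whole argument rests on two inputs already available: the uniform pointwise bound $|\theta_j(x)|<C$ from Theorem \ref{eigenfunction_bound}, and the linear growth of the $k_j$ encoded in the Weyl law, Theorem \ref{Weyl_law}.

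First I would establish convergence. Ordering the $k_j$ increasingly, Theorem \ref{Weyl_law} with $K_0=0$ gives $\#\{j:k_j<K\}\le \tfrac{\mathcal L}{\pi}K+2E$, so $k_j\ge c\,j$ for some $c>0$ and all large $j$; only finitely many $k_j^2$ can be negative, since $D$ is bounded below (an $L^2$ potential is form-bounded relative to $-\partial_x^2$ on the compact graph). Combining this with $|\theta_j(x)\theta_j(y)|\le C^2$ and the Weierstrass $M$-test shows that for every $t_0>0$ the series, as well as the series obtained by applying $\partial_t$ or by multiplying the general term by any fixed power $k_j^{2m}$, converge absolutely and uniformly in $(x,y)\in G\times G$ and in $t\ge t_0$, each being dominated by $C^2\sum_j k_j^{2m}e^{-k_j^2 t_0}<\infty$. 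Hence $e(t,x,y)$ is well defined and continuous on $(0,\infty)\times G\times G$.

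Next I would check the heat equation. Uniform convergence of the $t$-differentiated series justifies $\partial_t e=-\sum_j k_j^2 e^{-k_j^2 t}\theta_j(x)\theta_j(y)$. Since each $\theta_j$ solves the eigenvalue equation, $D_y\theta_j=k_j^2\theta_j$, applying $D_y$ termwise yields the same series (this is the point where the eigenvalue equation replaces the awkward second derivative by $k_j^2\theta_j$), so $(\partial_t+D_y)e=0$. Because the potential is only assumed to be in $L^2$, $\theta_j''$ exists merely almost everywhere and the identity $D_y\theta_j=k_j^2\theta_j$, and hence the heat equation, holds in the weak sense in $y$; this is exactly the loss of smoothness flagged in the statement. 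For the initial condition I would test against $g\in L^2(G)$: orthonormality gives $\int_G e(t,x,y)g(y)\,dy=\sum_j e^{-k_j^2 t}\langle\theta_j,g\rangle\,\theta_j(x)=(e^{-tD}g)(x)$, and strong continuity of the semigroup (equivalently $e^{-k_j^2 t}\to1$ together with completeness of $\{\theta_j\}$) gives convergence to $g(x)$ as $t\to0^+$, i.e. $e(t,x,\cdot)\to\delta_x$ distributionally.

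Finally, for uniqueness I would use an energy estimate. If $e_1,e_2$ are two heat kernels then, for fixed $x$, $w_t:=e_1(t,x,\cdot)-e_2(t,x,\cdot)\in L^2(G)$ solves $\partial_t w_t=-Dw_t$ with $w_0=0$, so with $D\ge -c$ one gets $\tfrac{d}{dt}\|w_t\|^2=-2\langle Dw_t,w_t\rangle\le 2c\|w_t\|^2$, and Gronwall forces $w_t\equiv0$. The main obstacle is not the convergence, which is routine once Theorems \ref{eigenfunction_bound} and \ref{Weyl_law} are in hand, but the low regularity: one must interpret $D_y e$, the heat equation, and the initial condition in the correct weak sense, since for $U\in L^2$ the eigenfunctions, and therefore the kernel, are only $C^1$ in the spatial variables.
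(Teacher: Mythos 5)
Your existence argument follows the paper's route: the paper likewise combines the uniform eigenfunction bound (Theorem \ref{eigenfunction_bound}) with the Weyl law (Theorem \ref{Weyl_law}) to get absolute convergence of the series and of its termwise derivatives, and then verifies the heat equation via $D_y\theta_j=k_j^2\theta_j$ and the initial condition via orthonormality, essentially as you do. Your extra care about interpreting $D_y$ weakly when $U$ is only in $L^2$ is consistent with the paper's remark that the kernel need not be smooth.

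The uniqueness step is where you diverge, and where there is a genuine gap. Your Gronwall estimate gives $\|w_t\|^2\le e^{2c(t-t_0)}\|w_{t_0}\|^2$ for $0<t_0<t$, but to conclude $w_t\equiv 0$ you must send $t_0\to 0$, and you have no control on $\|w_{t_0}\|_{L^2}$ there. The initial condition $e_i(t,x,\cdot)\to\delta_x$ is only distributional; $\delta_x\notin L^2(G)$, the individual kernels have $L^2$ norms in $y$ blowing up like $t_0^{-1/4}$, and distributional convergence of the difference to zero does not imply $\|w_{t_0}\|_{L^2}\to 0$. So the statement ``$w_0=0$'' is not available in the norm in which Gronwall is run. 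The paper's argument avoids this: it expands an arbitrary heat kernel as $e(t,x,y)=\sum_j f_j(x,t)\theta_j(y)$ with $f_j(x,t)=\int_G e(t,x,y)\theta_j(y)\,dy$, derives the ODE $\partial_t f_j=-k_j^2 f_j$ from self-adjointness of $D$, hence $f_j=g_j(x)e^{-k_j^2 t}$, and then uses the initial condition only through the pairing against the fixed continuous function $\theta_j$, where distributional convergence is exactly enough to give $f_j(x,t)\to\theta_j(x)$ and hence $g_j=\theta_j$. If you want to keep a semigroup-flavoured proof, the standard repair is the duality identity $\frac{d}{ds}\int_G e_1(s,x,z)e_2(t-s,z,y)\,dz=0$ followed by the limits $s\to 0^+$ and $s\to t^-$, which again uses the initial condition only against a continuous kernel.
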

\begin{proof}
 This is a well known formal expression for the heat kernel. 
 The previous two theorems imply that the infinite sum converges absolutely for all $t>0$ and all $x,y \in G$. As the eigenfunctions are 
 smooth on the edges and satisfy the boundary conditions at the vertices so does the heat kernel.

 To get uniqueness we argue as in \cite{BGM71}. If $e(t,x,y)$ is a heat kernel, it can be written as
\begin{align*}
 e(t,x,y) = \sum_{j=0}^{\infty} f_j(x,t)\theta_j(y) &&\text{ with }&& f_j(x,t) = \int_G e(t,x,y) \theta_j(y)dy
\end{align*}
These coefficients $f_j$ satisfy 
 \begin{align*}
  \partial_t f_j(x,t) =& \int_G \partial_t e(t,x,y) \theta_j(y)dy\\
  =& -\int_G D_y e(t,x,y) \theta_j(y)dy\\
  =& -\int_G  e(t,x,y) D_y\theta_j(y)dy\\
  =& -k_j^2 f_j(x,t)
 \end{align*}
where we used the fact that $D$ is self-adjoint. This implies
\begin{align*}
 f_j(x,t) = g_j(x)e^{-k_j^2 t}
\end{align*}
Considering the limit $t \ra 0$ now shows $g_j(x)=\theta_j(x)$ and establishes uniqueness.
 \end{proof}

\section{Construction of a parametrix}
\label{section:parametrix}

We will construct a parametrix in three steps, first on the real line, then on a star graph and finally on the quantum graph.

\begin{definition}
 A parametrix is a function $h_k(t,x,y)$ that satisfies the following: 
\begin{enumerate}
 \item It is smooth for $t>0$.
 \item It is smooth for $t\ge 0$ if $x\neq y$ and satisfies $h_k(t,x,y)=O(t^{\infty})$ for $y$ bounded away from $x$.
\item $(\partial_t+D_y)h_k(t,x,y)$ is smooth for $t \ge 0$ and satisfies $(\partial_t+D_y)h_k(t,x,y)=O(t^{k-\frac{1}{2}})$.
\item $ \lim_{t \ra 0} h_k(t,x,y)= \delta_x(y)$ 
\end{enumerate} 
 \end{definition}
 
We will explain in the respective sections what we mean by smoothness on the star graph and the quantum graph.

\subsection{The real line}
\label{section:real_line}

We will start the construction of a heat parametrix on the real line. 
Let $f(t,x,y):=\frac{1}{\sqrt{4\pi t}} e^{-\frac{(x-y)^2}{4t}}$ denote the heat kernel of the standard Laplacian $-\partial_x^2$ on $\R$.

We will use the ansatz
\begin{align}
\label{ansatz_real_line}
 h_k(t,x,y):=f(t,x,y)\sum\limits_{l=0}^{k}u_l(x,y) t^l
\end{align}
where the $u$-functions are determined recursively. 
We will always assume $k\ge 1$, so there is at least one term beyond $\frac{1}{\sqrt{4\pi t}} e^{-\frac{(x-y)^2}{4t}}$.

We have
\begin{align*}
& \left(\partial_t-\partial_y^2 -U(y)\right)h_k(t,x,y)\\
=&f(t,x,y)\left(\sum\limits_{l=0}^{k-1}(l+1)u_{l+1}t^l - \frac{x-y}{t}\sum\limits_{l=0}^{k}\partial_y u_l t^l 
-\sum\limits_{l=0}^{k}\partial_y^2u_l t^l-U(y)\sum\limits_{l=0}^{k}u_l t^l\right)
\end{align*}

We order the terms by powers of $t$ and set all but the highest power to zero:
\begin{align*}
 0=&(x-y)\partial_y u_0\\
0=&(l+1)u_{l+1}+(y-x)\partial_y u_{l+1} -\partial_y^2u_l -U(y)u_l \\
&-\partial_y^2u_k-U(y)u_k \text{ is the highest order term }
\end{align*}

This leads to $u_0(x,y)=const$, we will show below in lemma \ref{real_parametrix} that $u_0(x,y)=1$. For $l\ge 1$ this leads to the transport equation.

\begin{lemma}
\label{transport_equation}
The transport equation on the real line is 
\begin{align*}
0=lu_l(x,y)+(y-x)\partial_y u_l(x,y)+ D_y u_{l-1}(x,y) 
\end{align*}
where $l \ge 1$ and $u_0(x,y)=1$. It has the solution
\begin{align*}
u_l(x,y) =-(y-x)^{-l}\int_x^y (z-x)^{l-1}D_z u_{l-1}(x,z)dz
\end{align*}
If the potential $U$ is smooth then so are all the $u$-functions. 
\end{lemma}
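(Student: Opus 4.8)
My plan is to treat $x$ as a fixed parameter and read the transport equation
\begin{align*}
0 = l\,u_l(x,y) + (y-x)\,\partial_y u_l(x,y) + D_y u_{l-1}(x,y)
\end{align*}
as a first-order linear inhomogeneous ODE in the single variable $y$, with the lower-order term $D_y u_{l-1}$ playing the role of a (known) source. Dividing by $(y-x)$ puts it in standard form with coefficient $\frac{l}{y-x}$, whose integrating factor is $(y-x)^l$. Multiplying through, the left-hand side collapses to a total derivative,
\begin{align*}
\partial_y\bigl[(y-x)^l u_l(x,y)\bigr] = -(y-x)^{l-1}D_y u_{l-1}(x,y),
\end{align*}
so the whole problem is reduced to a single integration.

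Integrating this identity from $x$ to $y$ produces the boundary term $(y-x)^l u_l(x,y)$ evaluated at the two endpoints. At $y=x$ the factor $(y-x)^l$ vanishes (here $l\ge 1$ is used) provided $u_l$ stays bounded at the diagonal, so that contribution drops out and one recovers exactly the claimed closed form $u_l(x,y) = -(y-x)^{-l}\int_x^y (z-x)^{l-1}D_z u_{l-1}(x,z)\,dz$. I would emphasise that this is not merely one antiderivative among many: choosing any other constant of integration would add a term proportional to $(y-x)^{-l}$, which blows up at $y=x$; demanding regularity across the diagonal singles out precisely this solution. With $u_0\equiv 1$ this also launches the recursion, since the $l=1$ instance is well defined.

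For the smoothness claim I would argue by induction on $l$, the base case being $u_0\equiv 1$. Assuming $U$ and $u_{l-1}$ are smooth, $D_z u_{l-1}$ is smooth, and the only thing in doubt is whether the apparent singularity of $(y-x)^{-l}$ spoils smoothness at $y=x$. The clean way to dissolve it is the rescaling $z = x + s(y-x)$, $s\in[0,1]$, under which $dz=(y-x)\,ds$ and $(z-x)^{l-1}=s^{l-1}(y-x)^{l-1}$, so the factor $(y-x)^l$ generated by the substitution cancels the prefactor exactly and leaves
\begin{align*}
u_l(x,y) = -\int_0^1 s^{l-1}\,[D_z u_{l-1}]\bigl(x,\,x+s(y-x)\bigr)\,ds.
\end{align*}
This representation is manifestly smooth in $(x,y)$ — including on the diagonal — and one may differentiate under the integral sign to propagate smoothness to $u_l$, closing the induction.

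The main obstacle is precisely the diagonal $y=x$: both the existence of a regular (rather than singular) solution and the smoothness of $u_l$ hinge on controlling the $(y-x)^{-l}$ factor, and the rescaling substitution is what makes this control transparent. The remaining verification — that the displayed formula indeed satisfies the transport equation, obtained by differentiating the total-derivative identity back — is routine.
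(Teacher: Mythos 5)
Your proposal is correct, and the ODE part coincides with the paper's: multiplying by the integrating factor $(y-x)^l$ is exactly the separation-of-variables/variation-of-constants computation in the paper, and your observation that any other constant of integration contributes a singular $c(y-x)^{-l}$ term reproduces the paper's follow-up remark that regularity on the diagonal forces $c=0$. Where you genuinely diverge is the smoothness claim. The paper delegates this to a separate lemma (Lemma \ref{analytic_continuation}), stating that $g(y)=(y-a)^{-n}\int_a^y(z-a)^{n-1}f(z)\,dz$ is $C^r$ at $a$ with $g(a)=f(a)/n$, and proves it only by a sketch (``$\varepsilon$-$\delta$ for $C^0$, Taylor expansion for higher orders''), which moreover addresses regularity in $y$ near the diagonal rather than joint regularity in $(x,y)$. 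Your rescaling $z=x+s(y-x)$ yields the representation
\begin{align*}
u_l(x,y)=-\int_0^1 s^{l-1}\,[D_z u_{l-1}]\bigl(x,\,x+s(y-x)\bigr)\,ds,
\end{align*}
which is manifestly jointly smooth, permits differentiation under the integral to close the induction, and recovers the paper's diagonal value $u_l(x,x)=-\tfrac{1}{l}D_xu_{l-1}(x,x)$ for free. This is a cleaner and more self-contained route to the same conclusion; the paper's version has the minor advantage of isolating the regularity statement as a reusable lemma, but your argument actually proves more with less machinery.
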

\begin{proof}
The homogeneous equation
\begin{align*}
 0=lu_l(x,y)+(y-x)\partial_y u_{l}(x,y)
\end{align*}
can be solved with separation of variables. It has the solution
\begin{align*}
u_l^{hom}(x,y)=c(y-x)^{-l}
\end{align*}
The inhomogeneous equation is then solved using variation of constants.
The smoothness follows from lemma \ref{analytic_continuation}, below.
 \end{proof}

\begin{remark}
 The general inhomogeneous solution consists of the particular one above plus a solution to the homogeneous equation. As we want our solution 
to be well defined for $x=y$ we have to choose the constant $c=0$ in the homogeneous equation. Thus the given solution is the only solution 
for the $u_l$ that is relevant to us.
\end{remark}

\begin{remark}
\label{local_u-functions}
 The solution to the transport equation implies that $u_l(x,y)$ only depends on the potential in some small neighbourhood around the interval from $x$ to $y$, in particular $u_l(x,x)$ only depends on a small neighbourhood of $x$. We will refer to this property by saying that the $u$-functions are local.
\end{remark}

\begin{lemma}
\label{analytic_continuation}
Let $f$ be a $C^r$ function, and let 
\begin{equation*}
 g(y)=(y-a)^{-n}\int_a^y (z-a)^{n-1}f(z)dz
\end{equation*}
then $g$ is $C^r$ at $a$ and
\begin{equation*}
 g(a)=\frac{f(a)}{n}
\end{equation*}
\end{lemma}
\begin{proof}
Use the $\varepsilon-\delta$ criterion for the $C^0$ case and a Taylor expansion for higher orders.
\end{proof}

\begin{lemma}
\label{real_parametrix}
Let $h_k(t,x,y)$ be defined as in equation (\ref{ansatz_real_line}) with the $u$-functions defined by the transport equation, lemma \ref{transport_equation}. Assume the potential $U$ is smooth. Then $h_k(t,x,y)$ is a parametrix on the real line.
\end{lemma}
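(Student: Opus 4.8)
The plan is to verify the four defining properties of a parametrix directly from the ansatz (\ref{ansatz_real_line}), using the fact that the $u_l$ were constructed precisely to kill the lower-order terms in $(\partial_t + D_y)h_k$. Properties (1) and (2) are structural: since $U$ is smooth, Lemma \ref{transport_equation} guarantees every $u_l(x,y)$ is smooth, and $f(t,x,y)$ is smooth for $t>0$, so $h_k$ is a finite sum of smooth functions and hence smooth for $t>0$. For (2), when $x \neq y$ the Gaussian factor $f(t,x,y)=\frac{1}{\sqrt{4\pi t}}e^{-(x-y)^2/4t}$ extends smoothly to $t=0$ (taking value $0$), and for $y$ bounded away from $x$ the exponential decay $e^{-(x-y)^2/4t}$ dominates every negative power of $t$, giving $h_k(t,x,y)=O(t^\infty)$.

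The heart of the argument is property (3). First I would establish that $u_0 \equiv 1$: the top recursion $0=(x-y)\partial_y u_0$ forces $u_0$ constant in $y$, and the correct normalisation $u_0=1$ is the one that makes property (4) hold, since $f(t,x,y)\,u_0$ must reproduce the standard heat kernel whose $t\to 0$ limit is $\delta_x$. Granting this, I would substitute the $u_l$ satisfying the transport equations back into the displayed expression for $(\partial_t-\partial_y^2-U(y))h_k$. By construction every power $t^l$ for $l=0,\dots,k-1$ cancels, leaving only the single leftover term
\begin{align*}
(\partial_t + D_y)h_k(t,x,y) = -f(t,x,y)\,\bigl(\partial_y^2 u_k(x,y) + U(y)u_k(x,y)\bigr)\,t^k .
\end{align*}
It then remains to show this is $O(t^{k-\frac12})$ and smooth up to $t=0$. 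The coefficient $\partial_y^2 u_k + U u_k = D_y u_k$ (up to sign) is smooth and bounded on the relevant compact region, so the estimate reduces to bounding $f(t,x,y)\,t^k$. Since $\frac{1}{\sqrt{4\pi t}}e^{-(x-y)^2/4t}\,t^k = \frac{1}{\sqrt{4\pi}}\,t^{k-1/2}e^{-(x-y)^2/4t}$ and the exponential is bounded by $1$, this is indeed $O(t^{k-1/2})$ uniformly in $x,y$, matching the definition's requirement.

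For property (4) I would argue that the $t\to 0$ limit is governed entirely by the $l=0$ term, because each $u_l t^l$ with $l\geq 1$ contributes $f(t,x,y)\,u_l(x,y)\,t^l$, and multiplying the approximate-identity $f$ by a positive power of $t$ sends it to zero as a distribution. Hence $\lim_{t\to 0}h_k(t,x,y)=\lim_{t\to 0}f(t,x,y)u_0(x,y)=\delta_x(y)$, which simultaneously pins down $u_0=1$ and closes the proof. The main obstacle I anticipate is making the limit in (4) and the uniformity of the $O(t^\infty)$ and $O(t^{k-1/2})$ estimates fully rigorous: one must control $f$ and its $t$-derivatives near the diagonal and confirm that the smoothness of the $u_l$ from Lemma \ref{transport_equation} (via Lemma \ref{analytic_continuation}) gives bounds on $\partial_y^2 u_k$ that are uniform in $x$ and $y$ over the region where the parametrix is used. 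The distributional limit argument, while standard, is where the Gaussian's approximate-identity property must be invoked carefully.
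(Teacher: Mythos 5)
Your proof is correct and follows essentially the same route as the paper: properties (1)--(3) from the construction and the smoothness of the $u_l$ (with the leftover term $f\cdot D_yu_k\cdot t^k=O(t^{k-\frac12})$, which you make more explicit than the paper does), and property (4) by integrating against a test function, observing that only the $l=0$ term survives the limit and that this forces $u_0(x,x)=1$. No gaps beyond those the paper itself leaves implicit.
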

\begin{proof}
The first three properties just follow from the construction and the smoothness of the $u_l$. 

For the last one recall that $f(t,x,y)=\frac{1}{\sqrt{4\pi t}} e^{-\frac{(x-y)^2}{4t}}$ is the heat kernel for the Laplacian without potential. 
Let $N_x$ be an open neighbourhood of $x$ with compact closure. Then
\begin{eqnarray*}
 &&\lim_{t\ra 0}\int_{N_x} h_k(t,x,y)\psi(y)dy\\
 &=&\lim_{t\ra 0}\int_{N_x} f(t,x,y)\sum_{l=0}^{k}u_l(x,y) t^l\psi(y)dy\\
 &=&\lim_{t\ra 0}\sum_{l=0}^{k}t^l\int_{N_x} f(t,x,y)u_l(x,y)\psi(y)dy\\
&=& \psi(x)u_0(x,x)
\end{eqnarray*}
as each integral in the sum converges to $u_l(x,x)\psi(x)$ by properties of $f$, so we have to set $u_0(x,x)=1$ .
\end{proof}

\subsection{A star graph}
\label{section:star_graph}

A star graph consists of $d$ half-infinite edges glued together at a central vertex $v$. At this vertex non-Robin boundary conditions are imposed as in section \ref{section:setup}. 

\begin{definition}
 We say a function is smooth on the star graph if its restriction to any two edges gives rise to a smooth function.
\end{definition}

This implies that all odd derivatives of the function vanish at the central vertex, whereas the even derivatives are continuous at the vertex. 

\begin{assumption}
We will assume that the potential function $U$ is smooth on the star graph. 
\end{assumption}

\begin{remark}
Through carefully keeping track of degrees of differentiability the assumptions on the potential can be relaxed. In this case, only the lower order $u$-functions are well defined and they have low degrees of differentiability at zero. However, any approach that uses a parametrix will require at the very least that the potential is continuous and that its first derivative vanishes at all the vertices. The general proof of existence of the heat kernel in section \ref{section:existence_heat_kernel} indicates that even these conditions should not be necessary. 

We will therefore not try to find the weakest possible differentiability conditions for the parametrix approach but keep the assumption above. This will significantly simplify further computations as this assumption makes all the $u$-functions smooth. Hopefully some different approach to heat kernel asymptotics on quantum graphs will give rise to results without such strong conditions on the potential in the future.  
\end{remark}

Let $x_{\alpha}$ denote the coordinate for the edge $\alpha$, parametrized as $[0, \infty)$.

In order to model a reflection at the vertex we will continue the potential function via $U_{\alpha}(-x_{\alpha}):=U_{\alpha}(x_{\alpha})$ onto the entire real line. The assumptions on the potential make this a smooth continuation.  

Let $ h_k^{\alpha \beta}(t,x_{\alpha}, -y_{\beta})$ denote the parametrix for the real line for the potential $U_{\alpha}(x)$ for $x \ge 0$ and $U_{\beta}(-x)$ for $x \le 0$. 

We define the parametrix of a star graph as follows.

\begin{definition}
\label{star_graph}
Let
\begin{align*}
 h_k^v(t,x_{\alpha},y_{\beta}):= \delta_{\alpha \beta}h_k^{\alpha \alpha}(t,x_{\alpha}, y_{\alpha}) 
 + \sigma_{v}^{\alpha \beta} h_k^{\alpha \beta}(t,x_{\alpha}, -y_{\beta})
\end{align*}
where $\delta_{\alpha \beta}$ is the Kronecker-$\delta$ and  $\sigma_{v}^{\alpha \beta}$ is an element of the matrix with the boundary conditions, see definition \ref{boundary_conditions}. This implies that $h_k^v$ satisfies the boundary conditions at the central vertex.
\end{definition}

\begin{lemma}
\label{star_properties}
 The function $h_k^{v}(t,x,y)$ is a parametrix on the star graph.
\end{lemma}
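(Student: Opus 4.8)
The plan is to verify the four defining properties of a parametrix for the function $h_k^v$ given in Definition \ref{star_graph}, reducing each to the corresponding property of the real-line parametrices $h_k^{\alpha\beta}$, which we already know from Lemma \ref{real_parametrix}. The key observation is that $h_k^v$ is built as a finite linear combination of real-line parametrices, so smoothness and the heat-equation estimate should transfer almost mechanically; the genuine content lies in the behaviour at the central vertex.

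First I would dispose of properties (1) and (3). Since $h_k^v$ is a finite sum of terms of the form $h_k^{\alpha\alpha}(t,x_\alpha,y_\alpha)$ and $\sigma_v^{\alpha\beta}h_k^{\alpha\beta}(t,x_\alpha,-y_\beta)$, and each summand is smooth for $t>0$ by Lemma \ref{real_parametrix}, property (1) is immediate. For property (3), I would apply the operator $(\partial_t + D_y)$ to each summand. Because the potential was continued evenly via $U_\alpha(-x_\alpha):=U_\alpha(x_\alpha)$ and this continuation is smooth by the standing assumption on the star graph, the reflected argument $-y_\beta$ causes no loss of regularity, and each $(\partial_t+D_y)h_k^{\alpha\beta}$ is $O(t^{k-\frac12})$ by the real-line result; summing finitely many such terms preserves the estimate. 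I would note here that smoothness on the star graph is meant in the sense of the definition above (restriction to any two edges is smooth), and that the even continuation of $U$ is precisely what makes the reflected real-line parametrix a legitimate object to differentiate up to the vertex.

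The main obstacle, and the place I would spend most of the argument, is property (4): showing $\lim_{t\to 0}h_k^v(t,x,y)=\delta_x(y)$, together with property (2), the $O(t^\infty)$ decay away from the diagonal. For a source point $x$ in the interior of a fixed edge $\alpha$ and a test function supported near $x$, the only summand contributing a delta-mass is the diagonal term $h_k^{\alpha\alpha}(t,x_\alpha,y_\alpha)$: its reflected partner $h_k^{\alpha\beta}(t,x_\alpha,-y_\beta)$ has source at $x_\alpha>0$ but is being tested against $y$ near $x_\alpha$, so the relevant Gaussian $f(t,x_\alpha,-y_\beta)=\frac{1}{\sqrt{4\pi t}}e^{-(x_\alpha+y_\beta)^2/4t}$ has its arguments bounded apart, forcing $O(t^\infty)$ decay. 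Hence only the diagonal real-line parametrix survives the limit, and by Lemma \ref{real_parametrix} it produces $\delta_x(y)$ with $u_0=1$; this simultaneously gives property (2) for $y$ away from $x$, since every Gaussian factor then has separated arguments.

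Finally, I would check that $h_k^v$ actually satisfies the non-Robin boundary conditions at $v$, which Definition \ref{star_graph} already asserts but which underpins calling it a parametrix \emph{on the star graph}. Using $\overrightarrow{f_v}$ and $\overrightarrow{f_v}'$ notation, I would evaluate $h_k^v$ and its outward normal derivatives at the vertex and use the defining relation $\sigma^v=-(A_v+ikB_v)^{-1}(A_v-ikB_v)$ together with $(\sigma^v)^2=\mathrm{Id}$; the even reflection of the potential ensures the $u$-functions satisfy the matching symmetry so that the boundary relation $A_v\overrightarrow{f_v}+B_v\overrightarrow{f_v}'=0$ holds order by order in $t$. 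The chief subtlety to watch is whether differentiating the reflected parametrix in $y$ up to the vertex produces the correct sign and matches $\sigma_v^{\alpha\beta}$; this is exactly the point where the $k$-independence of $\sigma^v$ (the non-Robin assumption) is used.
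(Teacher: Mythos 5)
Your proposal is correct and follows essentially the same route as the paper: properties (1)--(3) are inherited termwise from Lemma \ref{real_parametrix} (the even continuation of $U$ making the reflected real-line parametrices legitimate), and property (4) holds because only the diagonal term $h_k^{\alpha\alpha}(t,x_\alpha,y_\alpha)$ produces the delta while the reflected terms have Gaussian arguments $x_\alpha$ and $-y_\beta$ bounded apart and hence are $O(t^\infty)$. The only differences are cosmetic: you additionally sketch the verification of the vertex boundary conditions, which the paper asserts already in Definition \ref{star_graph}, and you omit the case $x$ equal to the central vertex, which the paper dispatches by continuity.
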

\begin{proof}
The first three properties follow from lemma \ref{real_parametrix}. For the last one, assume that $x$ is not the central vertex and lies on the edge $\alpha$. Let $G_{\alpha}$ denote the restriction of the graph to the half edge $\alpha$ and $N_x$ an open neighbourhood of the point $x$ with compact closure. Then 
\begin{align*}
& \lim_{t \ra 0}\int_{N_{x}} h_k^{v}(t,x,y)\psi(y)dy \\
 =& \lim_{t \ra 0}\int_{N_x \cap G_{\alpha}} h_k^{\alpha \alpha}(t,x_{\alpha},y_{\alpha})\psi(y_{\alpha})dy_{\alpha}  \\
& + \lim_{t \ra 0}\sum_{\beta}\sigma_v^{\alpha \beta}\int_{N_x \cap G_{\beta}} h_k^{v}(t,x_{\alpha},-y_{\beta})\psi(y_{\beta})dy_{\beta} 
\end{align*}
The first term converges to $\psi(x_{\alpha})$ and the remaining ones all converge to zero by lemma \ref{real_parametrix}. The case of $x$ equal to the central vertex follows by continuity.
\end{proof}

\subsection{Construction of a parametrix for a quantum graph}
\label{section:quantum_graph}

We assume that the graph does not have loops or multiple edges. This can be achieved without loss of generality by inserting additional vertices 
of degree $2$ with Kirchhoff-Neumann boundary conditions. These will not influence the spectrum of the operator so they will not show up in the asymptotics but they make the definition of the parametrix easier.

Let $l_0$ denote the length of the shortest edge on the graph and let $d$ denote the distance function on the graph.

\begin{definition}
Let $\eta : \R \ra [0, 1]$ be a smooth cut-off function such that
\begin{itemize}
 \item $\eta\mid_{(-\infty,\frac{1}{3}]}=1$ and  $\eta\mid_{[\frac{2}{3},\infty)}=0$
 \item $\eta(1-x)=1-\eta(x)$, that is $\eta$ is symmetric about the point $(\frac{1}{2},\frac{1}{2})$
\end{itemize}

\end{definition}

\begin{definition}
Let $V^v_{\frac{l_0}{3}}:= \{ x \in G \mid d(x,v) \le \frac{l_0}{3} \}$ be a neighbourhood of the vertex $v$.
Let $V_{\frac{l_0}{3}}:=\cup_{v\in V}V^v_{\frac{l_0}{3}}$ and denote the complement of this set in $G$ by $V_{\frac{l_0}{3}}^c$.
\end{definition}

\begin{definition}
We define a partition of unity $\{\chi_v \}_{v\in V}$ on the graph $G$ as follows. 
For $x$ on an edge adjacent to $v$ set 
\begin{align*}
 \chi_v(x):=\eta( l(x)^{-1}d(x,v))
\end{align*}
where $l(x)$ is the length of that edge. Set $\chi_v(v')= \delta_{vv'}$ for vertices and $\chi_v(x)=0$ if $x$ is not on an edge adjacent to $v$. This implies $\chi_v\mid_{V^v_{\frac{l_0}{3}}}\equiv 1$.
\end{definition}

For the potential function, we will just carry over the smoothness assumption from the star graph to an arbitrary graph.

\begin{assumption}
 We assume that the potential function $U$ is smoooth on the graph. In particular
\begin{equation*}
 U_{\alpha}^{(2l+1)}(v)=0 \hspace{1cm} and \hspace{1cm} U_{\alpha}^{(2l)}(v)=U_{\alpha'}^{(2l)}(v)
\end{equation*}
for all edges $\alpha, \alpha'$ adjacent to $v$ for all vertices $v$, for all $l \in \N$. 
\end{assumption}
This guarantees that the $u$-functions are well defined and their restrictions to any two adjacent edges are smooth functions.

\begin{definition}
\label{qgraph_parametrix}
Let $x,y \in G$, then we set
\begin{equation*}
 \tilde{h}_k(t,x,y):= \eta(2l_0^{-1}d(x,y))\sum_{v\in V}\chi_v(x)h_k^v(t,x,y)
\end{equation*}
where $h_k^v$ is the parametrix on the star graph constructed in definition \ref{star_graph}. 
\end{definition}

\begin{remark}
Note that this is well defined in the sense that the factor in front of $h_k^v(t,x,y)$ is only non-zero if both $x$ and $y$ lie on edges adjacent 
to $v$. 

The notation here is global and coordinate free. The coordinates only come into play at the level of the star graphs. If a point lies on an edge between the vertices $v$ and $v'$ this edge will be parametrised from $v$ to $v'$ in $h_k^v$ but from $v'$ to $v$ in $h_k^{v'}$ so the same point has different coordinates in different elements of the sum.
\end{remark}

\begin{lemma}
 The function $ \tilde{h}_k$ satisfies the boundary conditions at all the vertices.
\end{lemma}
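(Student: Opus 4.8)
The plan is to verify that $\tilde{h}_k$ satisfies the boundary conditions at every vertex $v'$ by examining the defining formula in definition \ref{qgraph_parametrix} and checking that the only summands that are active near $v'$ already satisfy those conditions. Recall that the boundary conditions at $v'$ are linear in the function values $\overrightarrow{(\cdot)_{v'}}$ and the outgoing derivatives $\overrightarrow{(\cdot)_{v'}}'$, so it suffices to show that each relevant term in the sum, together with its multiplicative prefactors, satisfies them; linearity then closes the argument.

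First I would localize. Fix a vertex $v'$ and consider the behaviour of $\tilde{h}_k(t,x,y)$ as $y$ ranges over the edges adjacent to $v'$, with $x$ fixed on some edge. Because of the factor $\eta(2l_0^{-1}d(x,y))$ and the support of the cut-offs $\chi_v$, the summand indexed by $v$ contributes near $v'$ only when $v=v'$: if $v \neq v'$ then either $\chi_v(x)=0$ (because $x$ is not on an edge adjacent to $v$) or the $y$-values near $v'$ fall outside the region where $h_k^v$ is relevant. More precisely, the prefactor $\eta(2l_0^{-1}d(x,y))\,\chi_v(x)$ depends only on $x$ and on $d(x,y)$, so as a function of $y$ it is constant in a neighbourhood of $v'$; hence applying the boundary-condition operator $A_{v'}\,\overrightarrow{(\cdot)_{v'}} + B_{v'}\,\overrightarrow{(\cdot)_{v'}}'$ to $\tilde{h}_k$ in the $y$-variable simply pulls this prefactor out, leaving
\begin{align*}
 A_{v'}\,\overrightarrow{(\tilde{h}_k)_{v'}} + B_{v'}\,\overrightarrow{(\tilde{h}_k)_{v'}}'
 = \eta(2l_0^{-1}d(x,v'))\,\chi_{v'}(x)\left( A_{v'}\,\overrightarrow{(h_k^{v'})_{v'}} + B_{v'}\,\overrightarrow{(h_k^{v'})_{v'}}'\right).
\end{align*}
The parenthesized quantity vanishes by lemma \ref{star_properties} and definition \ref{star_graph}, which guarantee that the star-graph parametrix $h_k^{v'}$ satisfies the non-Robin boundary conditions at its central vertex.

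The one point requiring genuine care, and which I expect to be the main obstacle, is justifying that the $y$-derivatives of the prefactor do not spoil the argument at $v'$ itself. The cut-off $\chi_{v'}(x)$ is independent of $y$, so only $\eta(2l_0^{-1}d(x,y))$ is a concern; here one uses that $d(x,\cdot)$ is smooth away from $x$ and, crucially, that for $x$ near $v'$ the distance function is evaluated in a regime where $\eta\equiv 1$ (since $d(x,y)\le 2l_0/3 < l_0$ forces the argument of $\eta$ below $1/3$), so the prefactor is genuinely constant equal to one in $y$ near $v'$ and contributes no derivative terms. For $x$ bounded away from $v'$, one checks that $d(x,y)$ as a function of $y$ is smooth across $v'$ in the sense appropriate to the star graph, so that the outgoing $y$-derivatives of the prefactor along the various edges at $v'$ agree and can be factored out uniformly. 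Once this consistency is confirmed, the displayed identity holds on the nose and the lemma follows. I would close by remarking that non-Robin boundary conditions are exactly what make this clean, since they do not mix function and derivative data.
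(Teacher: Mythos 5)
Your overall strategy is the same as the paper's: localize at a vertex $v'$, observe that for $y$ near $v'$ only the summand $v=v'$ survives (for $v\neq v'$ either $\chi_v(x)=0$ or the cut-off $\eta(2l_0^{-1}d(x,y))$ annihilates the term), and then invoke the fact that the star-graph parametrix $h_k^{v'}$ satisfies the non-Robin conditions at its central vertex by construction. That reduction is correct and matches the paper's proof.

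The gap is precisely in the step you flag as the crux, namely the $y$-derivatives of the prefactor $\eta(2l_0^{-1}d(x,y))$ at $v'$, and your resolution of it contains three concrete errors. First, the arithmetic is off: the argument of $\eta$ is $2l_0^{-1}d(x,y)$, which lies below $\tfrac13$ only when $d(x,y)<\tfrac{l_0}{6}$, not when $d(x,y)\le \tfrac{2l_0}{3}$; so the prefactor is locally constant in $y$ near $v'$ only in the two regimes $d(x,v')<\tfrac{l_0}{6}$ (where it is $\equiv 1$) and $d(x,v')>\tfrac{l_0}{3}$ (where it is $\equiv 0$), leaving the band $\tfrac{l_0}{6}\le d(x,v')\le \tfrac{l_0}{3}$ untreated. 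Second, in that band the outgoing $y$-derivatives of $d(x,\cdot)$ at $v'$ do \emph{not} agree: the derivative is $-1$ on the edge containing $x$ and $+1$ on every other edge adjacent to $v'$, so $d(x,\cdot)$ is not smooth at $v'$ in the star-graph sense. Third, even if the derivative vector of the prefactor were a common multiple of $(1,\dots,1)$ it could not simply be ``factored out'': writing $\tilde h_k=\phi\cdot h_k^{v'}$ with $\phi$ the prefactor, the product rule leaves the residual term $B_{v'}\,\overrightarrow{\phi_{v'}}'\cdot h_k^{v'}(t,x,v')$ in $A_{v'}\overrightarrow{f_{v'}}+B_{v'}\overrightarrow{f_{v'}}'$, and $B_{v'}\overrightarrow{\phi_{v'}}'$ need not vanish (it does not for Kirchhoff--Neumann conditions at a vertex of degree $\neq 2$), while $h_k^{v'}(t,x,v')$ is only $O(t^\infty)$, not zero. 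So your argument does not close on the band $\tfrac{l_0}{6}<d(x,v')<\tfrac{l_0}{3}$. To be fair, the paper's own two-line proof silently confines itself to the two regimes where the prefactor is locally constant and does not address this band either; but since you explicitly raise the point, you need an actual resolution there (e.g. showing the residual $O(t^\infty)$ defect is harmless for the subsequent construction, or modifying the cut-off), not the incorrect smoothness and ``agreeing derivatives'' claims.
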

\begin{proof}
If $d(x,y) > \frac{l_0}{3}$ then $\tilde{h}_k(t,x,y)=0$ because of the initial cut-off function. If $d(x,y) \le \frac{l_0}{3}$ and $x$ and $y$ are both within a $V^v_{\frac{l_0}{3}}$ neighbourhood of a vertex $v$ we have $\tilde{h}_k(t,x,y)=h_k^v(t,x,y)$ which satisfies the boundary conditions by construction, see definition \ref{star_graph}. 
\end{proof}

\begin{lemma}
\label{qgraph_properties}
 The function $\tilde{h}_k(t,x,y)$ is a parametrix on the graph $G$.
\end{lemma}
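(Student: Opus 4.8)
My plan is to verify the four defining properties of a parametrix one by one, deriving properties~(1), (2) and~(4) quickly from Lemma~\ref{star_properties} together with the support of the cut-offs, and concentrating the effort on property~(3). Throughout write $\Phi(x,y):=\eta(2l_0^{-1}d(x,y))$, so that $\tilde{h}_k(t,x,y)=\Phi(x,y)\sum_{v\in V}\chi_v(x)\,h_k^v(t,x,y)$. Note that $\Phi\equiv 1$ on $\{d(x,y)\le l_0/6\}$ (a neighbourhood of the diagonal) and $\Phi\equiv 0$ on $\{d(x,y)\ge l_0/3\}$, while $\partial_y\Phi$ and $\partial_y^2\Phi$ are supported in the collar $\{\,l_0/6\le d(x,y)\le l_0/3\,\}$.

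For property~(1), each $h_k^v$ is smooth for $t>0$ by Lemma~\ref{star_properties} and the cut-offs are $t$-independent, so $\tilde{h}_k$ is smooth on each edge for $t>0$; its behaviour at the vertices is governed by the boundary conditions, already verified in the preceding lemma. Property~(2) follows because $\tilde{h}_k\equiv 0$ once $d(x,y)\ge l_0/3$, while for $y$ at a fixed positive distance less than $l_0/3$ from $x$ the factor $h_k^v$ is $O(t^\infty)$ by property~(2) on the star graph. For property~(4) I integrate against a test function $\psi$: since $\Phi\equiv 1$ near the diagonal, where the mass of each $h_k^v(t,x,\cdot)$ concentrates as $t\to 0$, and since $\sum_{v}\chi_v(x)=1$, one gets $\int_G \tilde{h}_k(t,x,y)\psi(y)\,dy\to\big(\sum_v\chi_v(x)\big)\psi(x)=\psi(x)$.

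The substance is property~(3). As $\chi_v(x)$ is independent of $t$ and $y$ it commutes with $\partial_t+D_y$, and Leibniz' rule for $D_y=-\partial_y^2-U(y)$ gives
\begin{align*}
(\partial_t+D_y)\tilde{h}_k
=\sum_{v\in V}\chi_v(x)\Big[\Phi\,(\partial_t+D_y)h_k^v-2(\partial_y\Phi)(\partial_y h_k^v)-(\partial_y^2\Phi)\,h_k^v\Big].
\end{align*}
In the first summand $\chi_v\Phi$ is a bounded smooth factor multiplying the star-graph error $(\partial_t+D_y)h_k^v$, which is $O(t^{k-1/2})$ by property~(3) of Lemma~\ref{star_properties}; this gives the asserted bound. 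The remaining two summands are the commutators produced when $D_y$ hits the distance cut-off. Their point is that, being supported in the collar $\{l_0/6\le d(x,y)\le l_0/3\}$, they involve $h_k^v$ and $\partial_y h_k^v$ only where $y$ is bounded away from $x$. There $h_k^v=O(t^\infty)$ by property~(2), and $\partial_y h_k^v=O(t^\infty)$ as well, since differentiating $f(t,x,y)=\tfrac{1}{\sqrt{4\pi t}}e^{-(x-y)^2/4t}$ only introduces extra factors $(x-y)/t$ that are dominated by the Gaussian once $|x-y|$ is bounded below. Hence both commutator terms are $O(t^\infty)$, so the full expression is $O(t^{k-1/2})$.

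The step I expect to be the main obstacle is the regularity for $t\ge0$ of these commutator terms at a vertex. The distance $d(x,y)$ has a kink in $y$ where $y$ crosses the central vertex, so $\partial_y^2\Phi$ acquires a singular contribution there and $\Phi$ itself is only smooth along edges; one must argue that this does not damage the smoothness of $(\partial_t+D_y)\tilde{h}_k$ required by property~(3). The resolution is that every such non-smooth contribution lives inside the collar, where its companion factor $h_k^v$ or $\partial_y h_k^v$ already vanishes to infinite order in $t$; the offending terms are smooth in $t$, are $O(t^\infty)$, and extend by zero across $t=0$, so they affect neither the $O(t^{k-1/2})$ estimate nor the boundary conditions established in the previous lemma. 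Assembling the four properties shows that $\tilde{h}_k$ is a parametrix on $G$.
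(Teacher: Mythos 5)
Your proposal is correct and follows essentially the same route as the paper: verify properties (1), (2), (4) directly from Lemma \ref{star_properties} and the support of the cut-offs, and handle property (3) by Leibniz-expanding $(\partial_t+D_y)$ applied to the product, bounding the main term by $O(t^{k-\frac{1}{2}})$ and the commutator terms supported in the collar $\{\frac{l_0}{6}\le d(x,y)\le\frac{l_0}{3}\}$ by $O(t^{\infty})$. You are in fact slightly more careful than the paper on two points --- the factor $2$ in the cross term of the Leibniz rule (the paper's displayed formula omits it) and the explicit justification that $\partial_y h_k^v$, not just $h_k^v$, is $O(t^{\infty})$ away from the diagonal --- neither of which changes the argument.
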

\begin{proof}
The first two properties follow directly from the analogous properties of $h_k^v$ in lemma \ref{star_properties}.
For the third one we have
\begin{align*}
 &(\partial_t+D_y)\tilde{h}_k(t,x,y)\\
 =&  \eta(2l_0^{-1}d(x,y))\sum_{v\in V}\chi_v(x)(\partial_t+D_y)h_k^v(t,x,y)\\
& - \partial_y(\eta(2l_0^{-1}d(x,y)))\sum_{v\in V}\chi_v(x)\partial_yh_k^v(t,x,y) \\
&- \partial_y^2(\eta(2l_0^{-1}d(x,y)))\sum_{v\in V}\chi_v(x)h_k^v(t,x,y)
\end{align*}
The first term extends smoothly to $t\ra 0$ and satisfies $(\partial_t+D_y)h_k^v(t,x,y)= O(t^{k-\frac{1}{2}})$ by lemma \ref{star_properties}. The second and third term are only non-zero if $\frac{l_0}{6} \le d(x,y) \le \frac{l_0}{3}$ because the initial cut-off function is constant otherwise.
In this region $h_k^v(t,x,y)$ extends smoothly to $t \ra 0$ and satisfies $h_k^v(t,x,y)=O(t^{\infty})$ by lemma \ref{star_properties}.

Finally, if $x$ lies in the $V^v_{\frac{l_0}{3}}$ neighbourhood of a vertex $v$ this property follows from lemma \ref{star_properties}. If $x$ lies in $V^c_{\frac{l_0}{3}}$ on an edge between two vertices $v$ and $v'$ this again follows from \ref{star_properties} and $\chi_v(x)+\chi_{v'}(x)=1$ because this is a partition of unity.
\end{proof}

\section{Construction of the heat kernel}
\label{section:heat_kernel}


\begin{definition}
The convolution of two continuous kernels $P, Q \in C^0(\R_{>0}, G, G)$ on a metric graph $G$ is defined as follows.
\begin{equation*}
 (P*Q)(t,x,y):=\int_0^t \int_{G}P(s,x,z)Q(t-s,z,y)dzds
\end{equation*}
\end{definition}

\begin{lemma}
\label{convolution_estimate} 
Suppose $P(t,x,y)=O(t^k)$ and $Q(t,x,y)=O(t^{k'})$, with $k,k' > -1$, then
\begin{equation*}
 (P*Q)(t,x,y) = O(t^{k+k'+1})
\end{equation*}
\end{lemma}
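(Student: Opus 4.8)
The plan is to estimate the convolution integral directly using the hypothesised bounds and to reduce the spatial integral to a finite constant, so that only the time integral carries the relevant power of $t$. By the definition of the $O$-notation (with constants uniform in the spatial variables, as stipulated in the relevant definition), there exist constants $C_P, C_Q$ and a threshold $T>0$ such that $|P(s,x,z)| \le C_P s^k$ and $|Q(t-s,z,y)| \le C_Q (t-s)^{k'}$ for all $s \in (0,t]$ and $t \le T$, uniformly in $x,y,z \in G$. Substituting these into the definition of $P*Q$ gives
\begin{align*}
 |(P*Q)(t,x,y)| \le C_P C_Q \int_0^t \int_G s^k (t-s)^{k'}\, dz\, ds = C_P C_Q\, \mathcal{L} \int_0^t s^k (t-s)^{k'}\, ds,
\end{align*}
where $\mathcal{L}$ is the finite total edge length of $G$ (the graph is finite with edges of finite length, so $\int_G dz = \mathcal{L} < \infty$).

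The remaining step is to evaluate the one-dimensional time integral. First I would substitute $s = t\tau$ with $\tau \in [0,1]$, which yields
\begin{align*}
 \int_0^t s^k (t-s)^{k'}\, ds = t^{k+k'+1} \int_0^1 \tau^k (1-\tau)^{k'}\, d\tau = t^{k+k'+1} B(k+1,k'+1),
\end{align*}
where $B$ is the Euler Beta function. The convergence of this Beta integral is exactly where the hypothesis $k,k' > -1$ is used: the factor $\tau^k$ is integrable near $\tau = 0$ precisely when $k > -1$, and $(1-\tau)^{k'}$ is integrable near $\tau = 1$ precisely when $k' > -1$. Hence the constant $B(k+1,k'+1)$ is finite, and combining the estimates gives $|(P*Q)(t,x,y)| \le C_P C_Q\, \mathcal{L}\, B(k+1,k'+1)\, t^{k+k'+1}$ for all $t \le T$, uniformly in $x,y$, which is the claimed bound $O(t^{k+k'+1})$.

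This argument is essentially routine and I do not expect a serious obstacle; the only point requiring care is the uniformity of the constants in the spatial variables, which is guaranteed by the convention fixed in the definition of $O$-notation, together with the finiteness of $\int_G dz$ coming from the standing assumption that the graph has finitely many edges of finite length. One should also note that the product $P(s,x,z)Q(t-s,z,y)$ is integrable so that the manipulations are justified, which follows from continuity of $P,Q$ on $\R_{>0}\times G\times G$ together with the bounds above controlling the behaviour as $s\to 0^+$ and $s\to t^-$.
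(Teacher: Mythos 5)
Your proof is correct and follows essentially the same route as the paper: bound the integrand by the hypothesised constants, integrate out the spatial variable to get the factor $\mathcal{L}$, and reduce the time integral via the substitution $s=t\tau$ to the Beta function $B(k+1,k'+1)$, whose finiteness is exactly where $k,k'>-1$ enters. The extra remarks on uniformity of the constants and integrability are fine but add nothing beyond what the paper's one-line computation already implies.
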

\begin{proof}
This follows from a direct computation.
\begin{align*}
 |(P*Q)(t,x,y)|\le & \int_0^t \int_{G} C_P s^{k}  C_Q (t-s)^{k'}dzds\\
=& C_PC_Q \mathcal{L} t^{k+k'+1}\int_0^1 (s')^k(1-s')^{k'}ds'\\
 =& C_P C_Q \mathcal{L} t^{k+k'+1}B(k+1,k'+1)
\end{align*}
Here $B(k+1,k'+1)$ is the Beta-function and $\mathcal{L}$ is the total edge length of the graph $G$.
\end{proof}

\begin{definition}
 Let 
\begin{eqnarray*}
 \tilde{g}_k(t,x,y):=(\partial_t+D_y)\tilde{h}_k(t,x,y)
\end{eqnarray*}
\end{definition}

\begin{lemma}
\label{heat_operator_convolution}
Let $P$ be a continuous kernel, then
\begin{equation*}
(\partial_t + D_y ) (P*\tilde{h}_k)=P+P*\tilde{g}_k
\end{equation*}
\end{lemma}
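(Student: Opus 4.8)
This is a Duhamel-type identity, and the plan is to apply $\partial_t+D_y$ to the convolution
\begin{equation*}
 (P*\tilde h_k)(t,x,y) = \int_0^t\int_G P(s,x,z)\tilde h_k(t-s,z,y)\,dz\,ds
\end{equation*}
while treating the singularity of $\tilde h_k$ at coincident times $s=t$ with care. The essential observation is that, although $\partial_t\tilde h_k$ and $D_y\tilde h_k$ are individually singular as their time argument tends to zero, their sum is exactly $\tilde g_k$, which by the third parametrix property (lemma \ref{qgraph_properties}) extends smoothly to time argument $0$. So the operator should be applied to $\tilde h_k$ as a whole, not term by term, since that is where the singular contributions cancel.

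First I would regularise: for $\varepsilon>0$ set
\begin{equation*}
 F_\varepsilon(t,x,y):=\int_0^{t-\varepsilon}\int_G P(s,x,z)\tilde h_k(t-s,z,y)\,dz\,ds .
\end{equation*}
On the truncated domain the time argument $t-s$ is bounded below by $\varepsilon$, so $\tilde h_k(t-s,z,y)$ and its derivatives are smooth and bounded there and differentiation under the integral sign is justified. Applying $\partial_t$ produces a Leibniz boundary term from the upper limit together with an interior term, while $D_y$ passes directly under the integral; adding them and using $(\partial_t+D_y)\tilde h_k=\tilde g_k$ gives
\begin{equation*}
 (\partial_t+D_y)F_\varepsilon(t,x,y)= \int_G P(t-\varepsilon,x,z)\tilde h_k(\varepsilon,z,y)\,dz + \int_0^{t-\varepsilon}\int_G P(s,x,z)\tilde g_k(t-s,z,y)\,dz\,ds .
\end{equation*}

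Then I would let $\varepsilon\ra 0$. For the interior term, the smoothness of $\tilde g_k$ up to time argument $0$ keeps the integrand bounded near $s=t$, so it converges to $(P*\tilde g_k)(t,x,y)$. For the boundary term I would invoke the fourth parametrix property, $\lim_{\tau\ra 0}\tilde h_k(\tau,z,y)=\delta_z(y)$: since the leading term of $\tilde h_k$ is the symmetric Gaussian $f(\tau,z,y)=f(\tau,y,z)$ and every higher term carries a positive power of $\tau$, the concentration holds in the first spatial slot as well. Integrating $P(t-\varepsilon,x,z)$ against this in $z$, together with the continuity of $P$ for $t>0$, yields $P(t,x,y)$.

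The main obstacle is the justification that the limit $\varepsilon\ra 0$ commutes with the operator, i.e. that $(\partial_t+D_y)F=\lim_{\varepsilon\ra 0}(\partial_t+D_y)F_\varepsilon$. I would obtain this from uniform convergence (in $t$ on compact subsets of $\R_{>0}$) of $F_\varepsilon$ and of the displayed right-hand side, which follows from the integrability of the Gaussian in the spatial variable and the boundedness of $\tilde g_k$ near the diagonal. The genuinely delicate point is the delta-limit in the first slot, which the symmetry of $f$ resolves; the rest is bookkeeping of the Leibniz rule and dominated convergence.
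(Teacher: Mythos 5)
Your proposal is correct and follows essentially the same route as the paper: differentiate the convolution in $t$ via the Leibniz rule to produce a boundary term, identify that term as $P(t,x,y)$ using the delta-limit of $\tilde h_k$, and combine the interior terms into $P*\tilde g_k$ using $(\partial_t+D_y)\tilde h_k=\tilde g_k$. The $\varepsilon$-truncation and the remark about concentration in the first spatial slot simply make rigorous the formal interchange that the paper carries out directly.
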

\begin{proof}
Proof adapted from \cite{Rosenberg97}.
 \begin{eqnarray*}
  &&(\partial_t + D_y)(P*\tilde{h}_k)(t,x,y)\\
&=&\partial_t \int_0^t \int_{G}P(s,x,z)\tilde{h}_k(t-s,z,y)dzds+ D_y(P*\tilde{h}_k)(t,x,y)\\
&=& \lim_{s\ra t} \int_{G}P(s,x,z)\tilde{h}_k(t-s,z,y)dz + \int_0^t \int_{G}P(s,x,z)\partial_t \tilde{h}_k(t-s,z,y)dzds\\
&& + \int_0^t \int_{G}P(s,x,z)D_y\tilde{h}_k(t-s,z,y)dzds\\
&=& P(t,x,y) + (P*\tilde{g}_k)(t,x,y)
 \end{eqnarray*}
where we used the fact that $\tilde{h}_k(0,x,y)=\delta_{x}(y)$ in the last step.
\end{proof}

\begin{lemma}
\label{graph_estimate}
 We have
\begin{align*}
 \sum_{l=1}^{\infty}(-1)^{l+1}\tilde{g}_k^{*l}(t,x,y) = O(t^{k-\frac{1}{2}})
\end{align*}
in particular, this infinite sum converges.
\end{lemma}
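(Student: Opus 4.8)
The plan is to control each iterated convolution $\tilde{g}_k^{*l}$ by an explicit power of $t$ with a fully tracked constant, and then to show that these constants decay fast enough for the series to converge while the lowest power dominates. By the third parametrix property established in Lemma \ref{qgraph_properties}, $\tilde{g}_k = (\partial_t + D_y)\tilde{h}_k = O(t^{k-\frac{1}{2}})$, so I may fix constants $C$ and $T$ with $|\tilde{g}_k(t,x,y)| \le Ct^{k-\frac{1}{2}}$ for all $t \in [0,T]$. It is convenient to set $c := k + \frac{1}{2}$, which is positive since $k \ge 1$, so that this exponent equals $c-1 > -1$ and Lemma \ref{convolution_estimate} applies at every stage.

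Next I would run an induction on $l$ using Lemma \ref{convolution_estimate}. Applying it to $P = \tilde{g}_k$ (exponent $c-1$) and $Q = \tilde{g}_k^{*l}$ (exponent $lc-1$) shows that $\tilde{g}_k^{*(l+1)}$ has exponent $(l+1)c-1$, hence $\tilde{g}_k^{*l} = O(t^{lc-1})$ for every $l$. The essential point is to retain the constants: writing $|\tilde{g}_k^{*l}(t,x,y)| \le C_l\, t^{lc-1}$, the explicit constant in Lemma \ref{convolution_estimate} gives the recursion $C_{l+1} = C\mathcal{L}\,B(c,lc)\,C_l$ with $C_1 = C$, and therefore
\begin{equation*}
 C_l = C^l \mathcal{L}^{l-1}\prod_{j=1}^{l-1}B(c,jc).
\end{equation*}

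The crucial step is to evaluate this product. Writing each Beta-factor as $B(c,jc) = \Gamma(c)\Gamma(jc)/\Gamma((j+1)c)$, the ratios of Gamma-functions telescope, so that
\begin{equation*}
 \prod_{j=1}^{l-1}B(c,jc) = \Gamma(c)^{l-1}\frac{\Gamma(c)}{\Gamma(lc)} = \frac{\Gamma(c)^l}{\Gamma(lc)}, \qquad\text{whence}\qquad C_l = \frac{(C\mathcal{L}\Gamma(c))^l}{\mathcal{L}\,\Gamma(lc)}.
\end{equation*}
Because $\Gamma(lc)$ grows factorially in $l$ while the numerator grows only geometrically, the ratio test shows that $\sum_l C_l$ converges; hence $\sum_{l\ge 1} C_l\, t^{lc-1}$ converges absolutely and uniformly for $t$ in compact sets, which in particular yields convergence of the alternating sum in the statement.

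Finally, to read off the exponent I would factor out the lowest power. Restricting to $t \in [0,\min(T,1)]$ gives $t^{lc-1} = t^{k-\frac{1}{2}}\,t^{(l-1)c} \le t^{k-\frac{1}{2}}$, so
\begin{equation*}
 \Bigl|\sum_{l=1}^{\infty}(-1)^{l+1}\tilde{g}_k^{*l}(t,x,y)\Bigr| \le \sum_{l=1}^{\infty} C_l\, t^{lc-1} \le \Bigl(\sum_{l=1}^{\infty} C_l\Bigr)\, t^{k-\frac{1}{2}},
\end{equation*}
which is the claimed $O(t^{k-\frac{1}{2}})$ bound. The only genuine obstacle is the convergence of the series: a naive iteration of Lemma \ref{convolution_estimate} leaves an unevaluated product of Beta-functions, and one might fear that the constants grow geometrically and swamp the estimate. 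The telescoping identity above is exactly what exposes the $1/\Gamma(lc)$ factorial decay that controls this; everything else is bookkeeping.
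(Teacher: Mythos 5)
Your proof is correct, and its skeleton is the same as the paper's: an inductive bound on the iterated convolutions $\tilde{g}_k^{*l}$ whose constants decay factorially, followed by the ratio test and factoring out the lowest power $t^{k-\frac{1}{2}}$. The difference is purely in the bookkeeping of the induction step. The paper sacrifices the $t$-power of one factor in each convolution, bounding $|\tilde{g}_k(t-s,z,y)|$ by the constant $\overline{C}=CT^{k-\frac{1}{2}}$; this yields the weaker per-term exponent $k-\frac{1}{2}+l-1$ but only elementary denominators $(k+\frac{1}{2})(k+\frac{3}{2})\cdots(k-\frac{1}{2}+l-1)$, so no special functions appear. You instead keep the full powers, obtaining the sharper exponent $l(k+\frac{1}{2})-1$ at the cost of the Beta-function product, which you correctly collapse via $B(c,jc)=\Gamma(c)\Gamma(jc)/\Gamma((j+1)c)$ to the factorially small $\Gamma(c)^l/\Gamma(lc)$. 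Both routes are valid; the extra sharpness of your per-term exponent is not needed for the lemma, and both arguments (like the paper's) establish the bound and the convergence on an interval $[0,T]$, which is all the $O(\cdot)$ notation of the paper requires.
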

\begin{proof}
Proof adapted from \cite{Rosenberg97}. By lemma \ref{qgraph_properties}
\begin{equation*}
 |\tilde{g}_k(t,x,y)| \le Ct^{k-\frac{1}{2}} \le CT^{k-\frac{1}{2}}=:\overline{C}
\end{equation*}
for all $0 \le t \le T$ for some $T$. 
We will now show by induction 
\begin{equation*}
 |\tilde{g}_k^{*l}(t,x,y)| \le  \frac{C\overline{C}^{l-1}\mathcal{L}^{l-1}t^{k-\frac{1}{2}+l-1}}{(k-\frac{1}{2}+1)(k-\frac{1}{2}+2)\hdots(k-\frac{1}{2}+l-1)}
\end{equation*}
we have just shown the case $l=1$. We have
\begin{eqnarray*}
&& |\tilde{g}_k^{*l}(t,x,y)|\\
&\le& \int_0^t\int_G |\tilde{g}_k^{*(l-1)}(s,x,z)|\cdot|\tilde{g}_k(t-s,z,y)|dzds\\
&\le& \int_0^t\int_G \frac{C\overline{C}^{l-2}\mathcal{L}^{l-2}s^{k-\frac{1}{2}+l-2}}{(k-\frac{1}{2}+1)(k-\frac{1}{2}+2)\hdots(k-\frac{1}{2}+l-2)}\overline{C} dzds\\
&\le& \frac{C\overline{C}^{l-1}\mathcal{L}^{l-1}}{(k-\frac{1}{2}+1)(k-\frac{1}{2}+2)\hdots(k-\frac{1}{2}+l-2)}\int_0^ts^{k-\frac{1}{2}+l-2}ds
\end{eqnarray*}
which finishes the induction. Applying the ratio test shows that the sum of the upper bounds converges, this implies that the original sum converges as well.
\end{proof}

\begin{theorem}
 Let 
\begin{equation*}
 e(t,x,y):=\tilde{h}_k(t,x,y)+ \tilde{h}_k(t,x,y)*\sum_{l=1}^{\infty}(-1)^{l}\tilde{g}_k(t,x,y)^{*l}
\end{equation*}
then $e(t,x,y) \in C^{\infty}(\R_{>0} \times G \times G)$, is independent of $k$ for $k\ge 1$ and is the heat kernel on $G$.
\end{theorem}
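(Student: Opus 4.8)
The statement bundles three assertions --- smoothness for $t>0$, independence of $k$, and the heat-kernel property --- and I would prove them in the order: first that $e$ is a heat kernel for each fixed $k\ge 1$, and then deduce the other two from uniqueness. Write $F:=\sum_{l=1}^{\infty}(-1)^{l}\tilde{g}_k^{*l}$, so that $-F$ is exactly the series of Lemma \ref{graph_estimate}; in particular $F$ converges and $F=O(t^{k-\frac12})$. The heart of the argument is the heat equation $(\partial_t+D_y)e=0$, which I would obtain by applying the operator term by term, with the convolution arranged so that $\tilde{h}_k$ is the right-hand factor, as required by Lemma \ref{heat_operator_convolution}. On a single summand that lemma gives
\begin{equation*}
(\partial_t+D_y)\bigl(\tilde{g}_k^{*l}*\tilde{h}_k\bigr)=\tilde{g}_k^{*l}+\tilde{g}_k^{*l}*\tilde{g}_k=\tilde{g}_k^{*l}+\tilde{g}_k^{*(l+1)},
\end{equation*}
and summing against $(-1)^l$ the series telescopes to $-\tilde{g}_k$. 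Since $(\partial_t+D_y)\tilde{h}_k=\tilde{g}_k$ by the definition of $\tilde{g}_k$, the two contributions cancel and $(\partial_t+D_y)e=0$.

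For the initial condition I would use that $\tilde{h}_k\to\delta_x$ as $t\to 0$ by the fourth parametrix property (Lemma \ref{qgraph_properties}). The correction term is negligible: combining $F=O(t^{k-\frac12})$ with the uniform estimate $|\tilde{h}_k|\le Ct^{-\frac12}$ coming from the Gaussian prefactor and Lemma \ref{convolution_estimate} (with exponents $k-\frac12>-1$ and $-\frac12>-1$) gives $F*\tilde{h}_k=O(t^{k})$, which tends to $0$ because $k\ge 1$. Hence $\lim_{t\to 0}e=\delta_x(y)$ and $e$ is a heat kernel. Having established this for every $k\ge 1$, independence of $k$ is immediate from Theorem \ref{heat_kernel_existence}: the heat kernel is unique, so all the $k$-dependent constructions coincide with the single eigenfunction kernel $\sum_j e^{-k_j^2 t}\theta_j(x)\theta_j(y)$. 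That same series is manifestly smooth on the edges for each $t>0$, since the factors $e^{-k_j^2 t}$ decay faster than any power of $k_j$ while the eigenfunctions and their derivatives grow only polynomially (using the uniform bound of Theorem \ref{eigenfunction_bound} together with the eigenvalue equation $\theta_j''=-(k_j^2+U)\theta_j$ to control higher derivatives, and the counting of Theorem \ref{Weyl_law}), and it satisfies the vertex conditions; this yields $e\in C^{\infty}(\R_{>0}\times G\times G)$ in the graph sense.

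The step I expect to be the genuine obstacle is justifying the term-by-term application of $(\partial_t+D_y)$ to the infinite convolution series: Lemma \ref{graph_estimate} only provides $C^0$ convergence, whereas interchanging the heat operator with the sum requires uniform convergence of the once-$t$- and twice-$y$-differentiated series on compact subsets of $\{t>0\}$. I would settle this either by upgrading the inductive estimate of Lemma \ref{graph_estimate} to include the relevant derivatives of $\tilde{g}_k$ --- the factorial denominators again dominate everything by a convergent series --- or, more cheaply, by exploiting that the decay exponent $k-\frac12+l-1$ improves with $k$, so that for any prescribed finite order of differentiation a sufficiently large $k$ makes the differentiated series converge uniformly; the already-proved independence of $k$ then promotes this to full $C^{\infty}$ regularity. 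A smaller point to check is that the Duhamel boundary term in Lemma \ref{heat_operator_convolution}, where $\tilde{h}_k(0,\cdot)=\delta$ enters, is compatible with the summation, which is again guaranteed by the decay of $F$.
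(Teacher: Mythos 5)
Your proposal is correct and follows essentially the same route as the paper: the heat equation via Lemma \ref{heat_operator_convolution} and the telescoping of the Duhamel series, and the initial condition via the $O(t^k)$ bound on the correction term from Lemmata \ref{graph_estimate} and \ref{convolution_estimate}. You are in fact more complete than the paper's own proof, which silently uses the convolution ordering with $\tilde{h}_k$ as the right-hand factor, does not address smoothness or $k$-independence at all (your appeal to uniqueness and the eigenfunction series is the right way to supply these), and does not acknowledge the term-by-term differentiation issue you correctly flag.
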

\begin{proof}
 By lemma \ref{graph_estimate} we know that the infinite sum converges.
 
Using lemma \ref{heat_operator_convolution} we get:
\begin{eqnarray*}
 (\partial_t+D_y)e(t,x,y)&=& \tilde{g}_k + \sum_{l=1}^{\infty}(-1)^{l}\tilde{g}_k(t,x,y)^{*l}+\tilde{g}_k*\sum_{l=1}^{\infty}(-1)^{l}\tilde{g}_k(t,x,y)^{*l}\\
&=&0
\end{eqnarray*}
We also have
\begin{eqnarray*}
&& \int_G e(t,x,y)\psi(y)dy\\
&=& \int_G \tilde{h}_k(t,x,y)\psi(y)dy+ \int_G\tilde{h}_k*\sum_{l=1}^{\infty}(-1)^{l}\tilde{g}_k(t,x,y)^{*l}\psi(y)dy\\
&\ra_{t\ra 0}& \psi(x)+0
\end{eqnarray*}
The second term doesn't contribute because we have 
\begin{align*}
 \tilde{h}_k(t,x,y)*\sum_{l=1}^{\infty}(-1)^{l}\tilde{g}_k^{*l}(t,x,y)=O(t^k)
\end{align*}
 by lemmata \ref{graph_estimate} and \ref{convolution_estimate}.
\end{proof}

\section{The asymptotics of the heat kernel}
\label{section:asymptotics}

\begin{theorem}
\label{asymptotic_expansion}
 The heat kernel is approximated by the parametrix.
\begin{equation*}
 e(t,x,y)=\tilde{h}_k(t,x,y) + O(t^k)
\end{equation*}
Consequently, it admits an asymptotic expansion of the form
 \begin{align*}
  \int_G e(t,x,x) dx \sim_{t \ra 0^+} 
  \frac{1}{\sqrt{4\pi t}} \sum_{n=0}^{\infty}\int_G a_n(x) dx \cdot t^n 
 + \frac{1}{4} \sum_{v \in V}\sum_{\alpha \sim v} \sigma_v^{\alpha\alpha} \sum_{n=0, \frac{1}{2},1,\frac{3}{2}, \hdots}a_n^b(v) \cdot t^n
 \end{align*}
 with some universal coefficients $a_n(x)$ and $a_n^b(v)$.
\end{theorem}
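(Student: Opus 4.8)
The plan is to derive the first statement directly from the construction of the previous section, then to set $y=x$, integrate over $G$, and read the expansion off the diagonal of the parametrix. For the first assertion almost no new work is needed: by definition $e = \tilde h_k + \tilde h_k * \sum_{l=1}^{\infty}(-1)^l \tilde g_k^{*l}$, and the convolution remainder is exactly the quantity already controlled. Lemma \ref{graph_estimate} gives $\sum_{l\ge 1}(-1)^{l+1}\tilde g_k^{*l}=O(t^{k-1/2})$, so one further convolution against $\tilde h_k=O(t^{-1/2})$ combined with Lemma \ref{convolution_estimate} yields $e-\tilde h_k=O(t^k)$. Setting $y=x$, integrating, and using that $G$ has finite total length $\mathcal L$, we get $\int_G e(t,x,x)\,dx=\int_G \tilde h_k(t,x,x)\,dx+O(t^k)$ for every $k\ge 1$; since $k$ is arbitrary, it suffices to expand the diagonal parametrix to all orders.

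Next I would evaluate $\tilde h_k$ on the diagonal. Since $d(x,x)=0$ the outer cut-off satisfies $\eta(0)=1$, so by Definition \ref{qgraph_parametrix} the diagonal reduces to $\tilde h_k(t,x,x)=\sum_{v\in V}\chi_v(x)\,h_k^v(t,x,x)$. From Definition \ref{star_graph}, with $x$ on an edge $\alpha$ adjacent to $v$, the star parametrix splits on the diagonal as $h_k^v(t,x,x)=h_k^{\alpha\alpha}(t,x,x)+\sigma_v^{\alpha\alpha}\,h_k^{\alpha\alpha}(t,x,-x)$. The first, direct, piece is the real-line parametrix on its diagonal, $h_k^{\alpha\alpha}(t,x,x)=\frac{1}{\sqrt{4\pi t}}\sum_{l=0}^{k}u_l(x,x)\,t^l$; because $u_l(x,x)$ is independent of the orientation of the edge, summing against $\sum_{v}\chi_v\equiv 1$ and integrating produces exactly the bulk series with $a_n(x)=u_n(x,x)$, local by Remark \ref{local_u-functions} and computable from the transport equation (Lemma \ref{transport_equation}) via Lemma \ref{analytic_continuation}.

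The boundary series comes from the reflection piece, where $f(t,x,-x)=\frac{1}{\sqrt{4\pi t}}e^{-x^2/t}$ concentrates at the vertex as $t\ra 0$. After replacing $\chi_v$ by $1$ and extending the half-edge integral to $[0,\infty)$ (both at the cost of $O(t^\infty)$, since the remaining vertices sit at distance $\ge 2l_0/3$), one Taylor expands $x\mapsto u_l(x,-x)$ at the vertex and integrates term by term against the Gaussian using $\int_0^\infty \frac{1}{\sqrt{4\pi t}}e^{-x^2/t}x^m\,dx=\frac{1}{2\sqrt{4\pi}}\Gamma(\frac{m+1}{2})\,t^{m/2}$. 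A term of order $x^m$ inside $u_l\,t^l$ then contributes at order $t^{l+m/2}$, which is where the half-integer powers and the overall prefactor $\frac14$ (the case $l=m=0$, $u_0\equiv1$) come from. Since the potential is extended evenly across the vertex, an induction on the transport equation shows $u_l(-x,-y)=u_l(x,y)$, so $u_l(x,-x)$ is even in $x$, all odd Taylor coefficients vanish, and hence $a_n^b(v)=0$ for half-integer $n$; the surviving coefficients are universal constants times derivatives of $U$ at $v$, as claimed.

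The routine but delicate core is the boundary analysis of the last paragraph. The hard part will be justifying that the cut-off replacement and the truncation of the half-edge integral only produce $O(t^\infty)$ errors, that the Gaussian Laplace expansion may be integrated term by term against the remaining smooth data to yield a genuine asymptotic series, and carrying the parity identity $u_l(-x,-y)=u_l(x,y)$ cleanly through the recursive definition of the $u_l$ so that the vanishing of the half-integer coefficients is rigorous. The bulk computation is comparatively straightforward once the transport equation is solved on the diagonal; the real obstacle is controlling the reflection term and establishing the absence of the half-integer powers.
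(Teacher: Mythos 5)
Your proposal is correct and follows essentially the same route as the paper: the first claim via Lemmata \ref{graph_estimate} and \ref{convolution_estimate}, then the diagonal of the parametrix split into the direct term (giving $a_n(x)=u_n(x,x)$) and the reflected term, whose Gaussian concentration at the vertices is handled by the substitution $x\mapsto t^{1/2}x$ and term-by-term integration of the Taylor expansion of $u_l(x,-x)$, all up to $O(t^\infty)$ localization errors. The only genuine (and welcome) difference is your parity argument $u_l(-x,-y)=u_l(x,y)$ for the vanishing of the half-integer boundary coefficients, which the paper instead obtains by explicitly computing the Taylor expansions in Lemma \ref{u1u2_limits} and observing that only even powers survive under the stated assumptions on $U$.
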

\begin{proof}
This first claim follows from $\tilde{h}_k*\sum_{l=1}^{\infty}(-1)^{l}\tilde{g}_k^{*l}=O(t^k)$ by lemmata  \ref{graph_estimate} and \ref{convolution_estimate}. 

For the second one, we will split the proof into two parts, depending on whether $x$ is in the $V_{\frac{l_0}{3}}^v$ neighbourhood of a vertex or not. 

If $x$ is in a $V^v_{\frac{l_0}{3}}$ neighbourhood on the edge $\alpha$ we can unwind the definition of the parametrix all the way to the real line. Wer will denote the $u$-function in the coordinates of the star graph around $v$ by $u^v$.
\begin{align*}
& \int_{V^v_{\frac{l_0}{3}}} e(t,x,x)dx \\
=&  \sum_{\alpha \sim v}\int_0^{\frac{l_0}{3}}h_k^v(t,x_{\alpha},x_{\alpha})dx_{\alpha}+O(t^k)\\
=& \sum_{\alpha \sim v}\int_0^{\frac{l_0}{3}}h_k^{\alpha \alpha}(t,x_{\alpha},x_{\alpha})
+\sigma_v^{\alpha \alpha}h_k^{\alpha \alpha}(t,x_{\alpha},-x_{\alpha})dx_{\alpha}+O(t^{k})\\
=&\sum_{\alpha \sim v}\int_0^{\frac{l_0}{3}} \frac{1}{\sqrt{4\pi t}}\sum_{l=0}^{k}u_l^v(x_{\alpha},x_{\alpha})t^l dx_{\alpha}\\
&+\sum_{\alpha \sim v}\sigma_v^{\alpha \alpha}\int_0^{\frac{l_0}{3}}\frac{1}{\sqrt{4\pi t}}e^{-\frac{x_{\alpha}^2}{t}}\sum_{l=0}^{k}u_l^v(x_{\alpha},-x_{\alpha})t^ldx_{\alpha}+O(t^k)\\
=& \frac{1}{\sqrt{4\pi t}}\sum_{\alpha \sim v}\sum_{l=0}^{k}\int_0^{\frac{l_0}{3}}u_l^v(x_{\alpha},x_{\alpha}) dx_{\alpha}t^l\\
&+\frac{1}{\sqrt{4\pi }}\sum_{\alpha \sim v}\sigma_v^{\alpha \alpha}\sum_{l=0}^{k}\int_0^{\frac{l_0}{3}t^{-\frac{1}{2}}}e^{-x_{\alpha}^2}u_l^v(t^{\frac{1}{2}}x_{\alpha},-t^{\frac{1}{2}}x_{\alpha})dx_{\alpha}t^{l}+O(t^k)
\end{align*}
Now we will treat the part of the integral over $V^c_{\frac{l_0}{3}}$, assume that $x$ is on the edge $\alpha$ with end vertices $v$ and $v'$, then we have
\begin{align*}
 &e(t,x_{\alpha}, x_{\alpha}) \\
 =& \chi_v(x_{\alpha})h_k^v(t,x_{\alpha}, x_{\alpha}) + \chi_{v'}(x_{\alpha})h_k^{v'}(t,x_{\alpha}, x_{\alpha}) + O(t^k)\\
 =& h_k^{\alpha \alpha}(t,x_{\alpha}, x_{\alpha}) + O(t^k)
\end{align*}
by the properties of the partition of unity $\chi$ and the fact that $h_k(t,x,y)=O(t^{\infty})$ for $x$ and $y$ bounded away from each other by lemma \ref{real_parametrix}. Note that the part from $h_k^{v'}$ would be parametrised in the opposite direction but we can simply write everything in the same coordinates, this works because the $u$-functions are local, see \ref{local_u-functions}. 
This means
\begin{align*}
&\int_{V^c_{\frac{l_0}{3}}}e(t,x,x) dx \\
=& \sum_{\alpha \in E}\int_{\frac{l_0}{3}}^{l(\alpha)-\frac{l_0}{3}} h_k^{\alpha \alpha}(t,x_{\alpha},x_{\alpha})dx_{\alpha}+O(t^k)\\
=& \sum_{\alpha \in E}\int_{\frac{l_0}{3}}^{l(\alpha)-\frac{l_0}{3}}\frac{1}{\sqrt{4\pi t}} \sum_{l=0}^{k}u_l(x_{\alpha},x_{\alpha}) t^l dx_{\alpha} +O(t^k)
\end{align*}
Putting the two parts together gives
 \begin{align*}
 & \int_G e(t,x,x) dx \\
  \sim_{t \ra 0^+} &
  \frac{1}{\sqrt{4\pi t}} \sum_{l=0}^{k}\int_G u_l(x,x) dx \cdot t^l \\
& + \frac{1}{4} \sum_{v \in V}\sum_{\alpha \sim v} \sigma_v^{\alpha\alpha} \sum_{l=0}^{k}\int_0^{\frac{l_0}{3}t^{-\frac{1}{2}}}e^{-x_{\alpha}^2}u_l^v(t^{\frac{1}{2}}x_{\alpha},-t^{\frac{1}{2}}x_{\alpha}) dx_{\alpha}\cdot t^{l} +O(t^k)
 \end{align*}
As each $u_l(x,x)$ in a coordinate patch is local there is a well defined global function $u_l(x,x)$, so this implies that $a_l(x)=u_l(x,x)$. 
To compute the coefficients at the vertices one needs to plug in the Taylor expansions of $u_l^v(x,-x)$ for $x$ close to zero. We have
 \begin{align}
 \label{boundary_coefficients}
  \sum_{l=0, \frac{1}{2},1,\frac{3}{2}, \hdots}a_l^b(v) t^l = \frac{2}{\sqrt{\pi}}\sum_{\alpha \sim v} \sigma_v^{\alpha\alpha}\sum_{l=0}^{k}\int_0^{\frac{l_0}{3}t^{-\frac{1}{2}}}e^{-x_{\alpha}^2}u_l^v(t^{\frac{1}{2}}x_{\alpha},-t^{\frac{1}{2}}x_{\alpha}) dx_{\alpha}t^{l} +O(t^k)
 \end{align}
 We will now compute some $u$-functions and then use this equation to find the first few coefficients.
\end{proof}

\subsection{The $u$-functions}

In order to compute the coefficients in the asymptotics, we will compute the first few $u$-functions explicitly. First, we will need the values on the diagonal $u_l(x,x)$, and second we need the Taylor expansion of $u_l(x,-x)$ for $x$ approaching zero, that is a vertex of the graph.

\begin{lemma}
\label{u_diagonal}
On the diagonal the first terms are
 \begin{align*}
u_0(x,x)=&1\\
u_1(x,x)=&U(x)\\
u_2(x,x)=&\frac{U''(x)}{6}+\frac{U(x)^2}{2}\\
u_3(x,x) =& \frac{1}{60}U^{(4)}(x)+\frac{1}{6}U''(x)U(x)+ \frac{1}{12}U'(x)^2+\frac{1}{6}U(x)^3
 \end{align*}
 \end{lemma}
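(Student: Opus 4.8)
The plan is to compute the $u$-functions directly from the transport equation in Lemma \ref{transport_equation}, specialising the general recursion
\begin{equation*}
u_l(x,y) = -(y-x)^{-l}\int_x^y (z-x)^{l-1} D_z u_{l-1}(x,z)\,dz
\end{equation*}
to the diagonal $y=x$ by means of Lemma \ref{analytic_continuation}. First I would observe that the base case is $u_0(x,y)=1$, as established in Lemma \ref{real_parametrix}, so that $D_z u_0(x,z) = -U''(z)\cdot 0 - U(z) = -U(z)$; more precisely, since $D_z = -\partial_z^2 - U(z)$ acts on $u_0 \equiv 1$ to give $-U(z)$. Applying Lemma \ref{analytic_continuation} with $n=1$, $a=x$ and $f(z)=D_z u_0(x,z) = -U(z)$ immediately yields $u_1(x,x) = -\tfrac{1}{1}\cdot(-U(x)) = U(x)$, matching the claim.

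The main work is then an iteration: at each stage $l$ one must first produce the full off-diagonal function $u_{l-1}(x,z)$ as a function of $z$ (not merely its diagonal value), apply $D_z = -\partial_z^2 - U(z)$ to it, and then evaluate the resulting integral on the diagonal via Lemma \ref{analytic_continuation} with the appropriate $n=l$. The subtlety is that $D_z u_{l-1}(x,z)$ involves derivatives in the second slot of $u_{l-1}$, so the diagonal value $u_l(x,x)$ depends on the Taylor coefficients of $u_{l-1}(x,z)$ in $(z-x)$ up to second order, not just on $u_{l-1}(x,x)$. I would therefore carry along a short Taylor expansion of each $u_{l-1}(x,z)$ about $z=x$, writing $u_{l-1}(x,z)=\sum_m c_m(x)(z-x)^m$ with the $c_m$ expressed in terms of $U$ and its derivatives, and use the refined form of Lemma \ref{analytic_continuation}, namely that $g(a)=f(a)/n$ together with the higher-order Taylor data, to extract $c_0$ for the next step. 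Concretely, expanding $(z-x)^{l-1}D_z u_{l-1}(x,z)$ in powers of $(z-x)$ and integrating term by term gives
\begin{equation*}
u_l(x,x) = -\sum_m \frac{[\text{coefficient of }(z-x)^{l-1+m}\text{ in }D_z u_{l-1}(x,z)]}{l+m},
\end{equation*}
which reduces the whole computation to bookkeeping of Taylor coefficients.

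Carrying this out, for $u_2(x,x)$ I expect to need $u_1(x,z)$ to first order in $(z-x)$, apply $D_z$, and integrate with $n=2$; the $\tfrac{1}{6}U''$ term arises from the second-derivative part of $D_z$ hitting the quadratic Taylor term of $u_1$, while $\tfrac12 U^2$ comes from the potential term. For $u_3(x,x)$ one needs $u_2(x,z)$ to second order in $(z-x)$, which is where the $U'(x)^2$ and $U(x)^3$ cross-terms appear. The main obstacle is precisely this off-diagonal Taylor bookkeeping: the recursion genuinely requires more than the diagonal values of the previous $u$-function, so one cannot simply iterate the stated diagonal formulae but must track a few Taylor coefficients at each stage, and the algebra for $u_3$ (four terms, each requiring a separate coefficient extraction) is where errors are most likely. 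I would organise the computation by first tabulating, for each $l$, the expansion of $u_l(x,z)$ to the order needed downstream, and only then specialise to the diagonal, so that each application of Lemma \ref{analytic_continuation} is a single clean step.
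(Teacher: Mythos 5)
Your proposal follows essentially the same route as the paper: iterate the transport equation of Lemma \ref{transport_equation}, carry along the full Taylor expansion of each $u_{l-1}(x,z)$ about $z=x$ (needed because $D_z$ takes two derivatives off the diagonal, so $u_2$ requires $u_1$ through second order and $u_3$ requires $u_2$ through second order, hence $u_1$ through fourth order), and only then specialise to the diagonal via Lemma \ref{analytic_continuation}; your attributions of the individual terms (e.g.\ $\tfrac16 U''$ from $\partial_z^2$ hitting the quadratic coefficient of $u_1$, the $U'(x)^2$ term from the second-order data of $u_2$) are all consistent with the paper's computation. The only corrections needed are notational: writing $D_z u_{l-1}(x,z)=\sum_m d_m(x)(z-x)^m$, term-by-term integration gives $u_l(x,y)=-\sum_m \frac{d_m(x)}{l+m}(y-x)^m$, so on the diagonal only $m=0$ survives and $u_l(x,x)=-d_0(x)/l$ --- your displayed formula drops the $(y-x)^m$ factor and so, read literally, sums contributions that vanish at $y=x$, and ``$u_1(x,z)$ to first order'' should read ``to second order''; your subsequent prose makes clear you intend the correct procedure.
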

\begin{proof}
The $u$-functions are local and smooth by our assumptions on the potential. Thus we can carry out this computation globally and coordinate free.
Assume that $x$ and $y$ are on the same edge.
From the transport equation, lemma \ref{transport_equation}, we have
\begin{equation*}
u_1(x,y) =(y-x)^{-1}\int_x^y U(z)dz
\end{equation*}
We will plug in the Taylor expansion of $U$ at the point $x$. This and all following equations are meant to hold for any finite Taylor expansion up to a suitable remainder term.
\begin{align*}
u_1(x,y) &=(y-x)^{-1}\sum_{l\ge 0}\int_x^y (z-x)^{l}dz\frac{U^{(l)}(x)}{l!}\\
&=\sum_{l \ge 0}(y-x)^{l}\frac{U^{(l)}(x)}{(l+1)!}
\end{align*}
This gives the value of $u_1(x,x)$ and 
\begin{align*}
 D_yu_1(x,y) = -\sum_{l\ge 2}(y-x)^{l-2}\frac{U^{(l)}(x)}{(l-2)!(l+1)} - U(y) \sum_{l \ge 0}(y-x)^l\frac{U^{(l)}(x)}{(l+1)!}
\end{align*}
which we now use to compute $u_2(x,y)$, valid for $y$ close to $x$.
 \begin{align*}
  u_2(x,y)=&-(y-x)^{-2}\int_x^y (z-x)D_zu_1(x,z)dz\\
  =& (y-x)^{-2}\int_x^y \sum_{l \ge 2}(z-x)^{l-1}\frac{U^{(l)}(x)}{(l-2)!(l+1)}dz\\
 & +(y-x)^{-2}\int_x^y U(z)\sum_{l\ge 0}(z-x)^{l+1}\frac{U^{(l)}(x)}{(l+1)!}dz\\
  =& (y-x)^{-2}\sum_{l \ge 2}\frac{(y-x)^{l}}{l}\frac{U^{(l)}(x)}{(l-2)!(l+1)}\\
 & +(y-x)^{-2}\int_x^y \sum_{m \ge 0}\frac{U^{(m)}(x)}{m!}(z-x)^m\sum_{l \ge 0}(z-x)^{l+1}\frac{U^{(l)}(x)}{(l+1)!}dz\\
  =& \sum_{l \ge 2}\frac{U^{(l)}(x)}{(l-2)!l(l+1)}(y-x)^{l-2}\\
 & +\sum_{l,m \ge 0} \frac{U^{(l)}(x)U^{(m)}(x)}{(l+1)!m!(m+l+2)}(y-x)^{m+l}
 \end{align*}
This determines $u_2(x,x)$ and we have
 \begin{align*}
  \partial_y^2 u_2(x,y)
  =& \sum_{l \ge 4}\frac{U^{(l)}(x)}{(l-4)!l(l+1)}(y-x)^{l-4}\\
 & +\sum_{l+m\ge 2} \frac{U^{(l)}(x)U^{(m)}(x)(m+l)(m+l-1)}{(l+1)!m!(m+l+2)}(y-x)^{m+l-2}
 \end{align*}
Finally, from the transport equation, lemma \ref{transport_equation}
 \begin{align*}
  u_3(x,x)=& -\frac{1}{3}D_yu_2(x,x) \\
  =&\frac{1}{3}\partial_y^2 u_2(x,x) + \frac{1}{3}U(x)u_2(x,x)\\
   =&\frac{1}{60}U^{(4)}(x)+\frac{1}{6}U''(x)U(x)+ \frac{1}{12}U'(x)^2+\frac{1}{6}U(x)^3
 \end{align*}
\end{proof}

\begin{lemma}
\label{u1u2_limits}
 The Taylor expansions of $u_1^v(x,-x)$ and $u_2^v(x,-x)$ for small $x$ are given by
 \begin{align*}
  u_1^v(x,-x) = & U(v) + \frac{1}{6}U''(v)x^2 + \frac{1}{5!}U^{(4)}(v)x^4 +O(x^6)\\
  u_2^v(x,-x) = & \frac{1}{6}U''(v)  + \frac{1}{2}U(v)^2+ \left(\frac{1}{60}U^{(4)}(v) + \frac{1}{6}U(v)U''(v)\right)x^2 +O(x^4)
 \end{align*}
where as before we assume that the odd derivatives of $U$ vanish at the vertices.
\end{lemma}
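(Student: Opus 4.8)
The plan is to start from the explicit power-series representations of $u_1$ and $u_2$ already extracted in the proof of lemma \ref{u_diagonal}, namely
\begin{align*}
u_1(x,y) &= \sum_{l\ge 0}(y-x)^l\frac{U^{(l)}(x)}{(l+1)!},\\
u_2(x,y) &= \sum_{l \ge 2}\frac{U^{(l)}(x)}{(l-2)!\,l\,(l+1)}(y-x)^{l-2} + \sum_{l,m \ge 0} \frac{U^{(l)}(x)U^{(m)}(x)}{(l+1)!\,m!\,(m+l+2)}(y-x)^{m+l},
\end{align*}
and simply to evaluate them at $y=-x$. The only point needing a word of justification is that these formulas carry over unchanged to the star graph: by construction $u_l^v$ is the $u$-function for the potential reflected evenly across the vertex, and the standing assumption that all odd derivatives $U^{(2k+1)}(v)$ vanish makes this reflection smooth, so the transport equation, and hence its series solution, have exactly the same shape, with $U^{(l)}(x)$ now read off from the reflected potential.

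Next I would set $y=-x$, so that each factor $(y-x)^n$ becomes $(-2)^n x^n$, and then insert the vertex Taylor expansion $U^{(l)}(x)=\sum_{j\ge 0}\frac{U^{(l+j)}(v)}{j!}x^j$. Collecting equal powers of $x$ rewrites $u_1^v(x,-x)$ as $\sum_n c_n\,U^{(n)}(v)\,x^n$ with $c_n=\sum_{l=0}^{n}\frac{(-2)^l}{(l+1)!(n-l)!}$; since $U^{(n)}(v)=0$ for odd $n$, only even powers survive, and computing $c_0,c_2,c_4=1,\tfrac16,\tfrac1{5!}$ gives the first expansion. For $u_2^v(x,-x)$ the same substitution in the single sum yields the constant $\tfrac16 U''(v)$ (from $l=2$) together with the $x^2$-coefficient $\tfrac1{60}U^{(4)}(v)$ (from the three terms $l=2,3,4$, whose weights combine as $\tfrac1{12}-\tfrac16+\tfrac1{10}=\tfrac1{60}$), while the double sum contributes the remaining constant $\tfrac12 U(v)^2$ from $l=m=0$.

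The step I expect to be the real obstacle is the $x^2$-coefficient coming from the double sum: there one must pair the weight $\frac{(-2)^{m+l}}{(l+1)!\,m!\,(m+l+2)}$, which is \emph{not} symmetric in $l$ and $m$, against the $x^2$-part of the Cauchy product $U^{(l)}(x)U^{(m)}(x)$, grouping the contributions according to $p=l+m\in\{0,1,2\}$. Using $U'(v)=U'''(v)=0$ to discard the odd pieces, every surviving term reduces to a multiple of $U(v)U''(v)$, and summing the three group totals $\tfrac12-1+\tfrac23=\tfrac16$ produces the cross term $\tfrac16 U(v)U''(v)$, so the full $x^2$-coefficient of $u_2^v$ is $\tfrac1{60}U^{(4)}(v)+\tfrac16 U(v)U''(v)$, as claimed. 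The whole argument is finite bookkeeping of rational coefficients; the only thing to watch is to carry the Taylor expansions of the $U^{(l)}(x)$ to sufficient order (up to $x^4$ for $u_1$ and $x^2$ for $u_2$) before truncating, and to keep the odd-derivative cancellations in view so that the odd powers of $x$ drop out automatically.
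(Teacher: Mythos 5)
Your proposal is correct and takes essentially the same route as the paper: expand the potential in a Taylor series, insert it into the explicit transport-equation solutions for $u_1$ and $u_2$, set $y=-x$, and use the vanishing of the odd derivatives at the vertex to kill the odd powers of $x$. The only difference is bookkeeping — the paper re-derives $u_1,u_2$ with the expansion centred at the vertex, while you reuse the formulas from lemma \ref{u_diagonal} (centred at $x$) and re-expand $U^{(l)}(x)$ about the vertex afterwards; your rational coefficients ($1,\tfrac16,\tfrac1{120}$ for $u_1$, and $\tfrac1{12}-\tfrac16+\tfrac1{10}=\tfrac1{60}$, $\tfrac12-1+\tfrac23=\tfrac16$ for $u_2$) all check out, and you correctly flag the asymmetry of the weight in the double sum and the justification for carrying the real-line formulas across the vertex via the even reflection.
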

\begin{proof}
Recall that in the coordinates of a star graph $0$ corresponds to the central vertex and all edges are parametrised away from the vertex. Using the Taylor expansion of $U$ at zero gives
\begin{align*}
u_1(x,y)=&-(y-x)^{-1}\sum_{l \ge 0}(x^{l+1}-y^{l+1})\frac{U^{(l)}(0)}{(l+1)!}\\
=&\sum_{l \ge 0}\frac{U^{(l)}(0)}{(l+1)!}\sum_{m=0}^lx^my^{l-m}
\end{align*}
As in the proof of lemma \ref{u_diagonal} this holds for any finite Taylor expansion up to a suitable remainder term.
This determines the Taylor expansion of $u_1(x,-x)$. 
\begin{align*}
D_yu_1(x,y) =& -\sum_{l \ge 2}\frac{U^{(l)}(0)}{(l+1)!}\sum_{m=0}^{l-2}(l-m)(l-m-1)x^ly^{l-m-2}\\
& + \sum_{l,m \ge 0}\frac{U^{(l)}(0)U^{(m)}(0)}{l!(m+1)!}(y-x)^{-1}y^l(x^{m+1}-y^{m+1})
\end{align*}

We will now use this expression to compute the expansion of $u_2(x,-x)$ for $x$ close to zero.
\begin{align*}
 u_2(x,-x) =& \frac{1}{4x^2}\int_{-x}^x(y-x)D_y u_1(x,y) dy\\
 =& -\frac{1}{4x^2}\int_{-x}^x(y-x)\sum_{l \ge 2}\frac{U^{(l)}(0)}{(l+1)!}\sum_{m=0}^{l-2}(l-m)(l-m-1)x^ly^{l-m-2} dy\\
 &+\frac{1}{4x^2}\int_{-x}^x \sum_{l,m \ge 0}\frac{U^{(l)}(0)U^{(m)}(0)}{l!(m+1)!}y^l(x^{m+1}-y^{m+1})  dy\\
 = & \frac{1}{6}U''(0) + \frac{1}{60}U^{(4)}(0)x^2 + \frac{1}{2}U(0)^2 + \frac{1}{6}U(0)U''(0)x^2 +O(x^4)
\end{align*}

\end{proof}

\subsection{Computing the coefficients}

Using the approximation of the heat kernel by the parametrix and the values of the $u$-functions we can now compute the values of the coefficients in the asymptotics.

\begin{theorem}
\label{coefficients_heat_trace}
With our assumptions on the potential $a_n^b(v)=0$ whenever $n$ is not an integer. The first few non-zero coefficients are as follows.
\begin{align*}
& a_0(x) = 1 && a_0^b(v)=1\\
& a_1(x) = U(x) && a_1^b(v)=U(v)\\
& a_2(x) =  \frac{1}{6}U''(x)+\frac{1}{2}U(x)^2 && a_2^b(v)=\frac{1}{4}U''(v)+\frac{1}{2}U(v)^2\\
& a_3(x) = \frac{1}{60}U^{(4)}(x)+\frac{1}{6}U''(x)U(x)+ \frac{1}{12}U'(x)^2+\frac{1}{6}U(x)^3  \\
& a_3^b(v)=\frac{1}{32}U^{(4)}(v)+\frac{1}{4}U(v)U''(v)+\frac{1}{6}U(v)^3
\end{align*}
\end{theorem}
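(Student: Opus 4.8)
The plan is to treat the bulk coefficients $a_n(x)$ and the boundary coefficients $a_n^b(v)$ separately, since Theorem~\ref{asymptotic_expansion} has already reduced each of them to an explicit manipulation of the $u$-functions. The bulk part is immediate: Theorem~\ref{asymptotic_expansion} identifies $a_n(x)=u_n(x,x)$, so the stated formulas for $a_0,\dots,a_3$ are nothing but the diagonal values computed in Lemma~\ref{u_diagonal}, and no further work is required there. All the content of the theorem therefore lives in the boundary terms.

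For those I would start from equation~(\ref{boundary_coefficients}), which writes $\sum_n a_n^b(v)\,t^n$ in terms of the rescaled Gaussian integrals $\int_0^{\,l_0 t^{-1/2}/3} e^{-x_\alpha^2}\,u_l^v(t^{1/2}x_\alpha,-t^{1/2}x_\alpha)\,dx_\alpha$. The first step is a parity observation. Because the potential is continued \emph{evenly} across the vertex, $U(-x)=U(x)$, a short induction on the transport equation of Lemma~\ref{transport_equation} gives $u_l(-x,-y)=u_l(x,y)$, and hence that $x\mapsto u_l^v(x,-x)$ is an even function of $x$. This already proves the vanishing assertion: only even powers $x^{2m}$ occur in the Taylor expansion of $u_l^v(x,-x)$, so after the substitution $x\mapsto t^{1/2}x$ every surviving term carries an integer power of $t$, and thus $a_n^b(v)=0$ for non-integer $n$. (The upper limit $\tfrac{l_0}{3}t^{-1/2}$ may be replaced by $+\infty$ with error $O(t^\infty)$ by Gaussian tail decay, exactly as in the treatment of finite Taylor expansions in Theorem~\ref{asymptotic_expansion}.)

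With parity established I would expand $u_l^v(t^{1/2}x,-t^{1/2}x)=\sum_m c_{l,m}\,t^m x^{2m}$, integrate term by term using the Gaussian moments $\tfrac{2}{\sqrt\pi}\int_0^\infty e^{-x^2}x^{2m}\,dx=(2m-1)!!/2^m=1,\tfrac12,\tfrac34,\dots$, and collect the coefficient of $t^n$ as $a_n^b(v)=\sum_{l+m=n}\tfrac{(2m-1)!!}{2^m}\,c_{l,m}$. The coefficients $c_{l,m}$ needed through $n=3$ are precisely what the preceding lemmas provide: the $x^2$ and $x^4$ coefficients of $u_1^v$ and the $x^2$ coefficient of $u_2^v$ come from Lemma~\ref{u1u2_limits}, the value $u_0^v\equiv 1$ is trivial, and the single value $u_3^v(0,0)=u_3(v,v)$ is read off from Lemma~\ref{u_diagonal} after using $U'(v)=0$. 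Notably, $u_3^v$ is needed only at the vertex, so no new transport-equation computation is required for it.

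The one genuinely delicate point is the bookkeeping forced by the rescaling $x\mapsto t^{1/2}x$, which trades spatial order for powers of $t$ and therefore makes each $a_n^b(v)$ a \emph{mixture} of several $u$-functions at different orders: $a_2^b(v)$ combines $u_2^v(0,0)$ with half the $x^2$-coefficient of $u_1^v$, and $a_3^b(v)$ superposes three contributions weighted by $1,\tfrac12,\tfrac34$. Keeping the correct Gaussian weight attached to each pair $(l,m)$ and summing the resulting rationals — for example verifying $\tfrac{1}{60}+\tfrac{1}{120}+\tfrac{1}{160}=\tfrac{1}{32}$ for the $U^{(4)}(v)$ term of $a_3^b(v)$ — is where the only real risk of error lies; once the weights are in place every remaining step is a direct substitution.
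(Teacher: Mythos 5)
Your proposal is correct and follows essentially the same route as the paper: identify $a_n(x)=u_n(x,x)$ from Lemma~\ref{u_diagonal}, then substitute the Taylor expansions of $u_l^v(x,-x)$ from Lemma~\ref{u1u2_limits} (plus $u_3^v(0,0)=u_3(v,v)$ with $U'(v)=0$) into equation~(\ref{boundary_coefficients}) and evaluate the Gaussian moments, whose weights $1,\tfrac12,\tfrac34$ match the paper's integrals $\tfrac{\sqrt\pi}{2},\tfrac{\sqrt\pi}{4},\tfrac{3\sqrt\pi}{8}$ after the prefactor $\tfrac{2}{\sqrt\pi}$. Your explicit parity induction for the evenness of $u_l^v(x,-x)$ and the closed-form bookkeeping $a_n^b(v)=\sum_{l+m=n}\tfrac{(2m-1)!!}{2^m}c_{l,m}$ are only a slightly more systematic presentation of what the paper does term by term, and your arithmetic (e.g.\ $\tfrac{1}{60}+\tfrac{1}{120}+\tfrac{1}{160}=\tfrac{1}{32}$) checks out.
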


\begin{proof}
We showed $a_l(x)=u_l(x,x)$ in the proof of theorem \ref{asymptotic_expansion}, so the values for $a_0(x), \hdots, a_3(x)$ follow directly from lemma \ref{u_diagonal}.

To compute the boundary coefficients we will use the Taylor expansions computed in lemma \ref{u1u2_limits} and plug them in equation (\ref{boundary_coefficients}) from the proof of theorem \ref{asymptotic_expansion}. Note that because of our assumptions on the potential, only the even terms in the Taylor expansions appear, so all terms at half powers of $t$ vanish.
\begin{align*}
 &\sum_{l=0, \frac{1}{2},1,\frac{3}{2}, \hdots}a_l^b(v) t^l \\
 =& \frac{2}{\sqrt{\pi}}\sum_{\alpha \sim v} \sigma_v^{\alpha\alpha}\sum_{l=0}^{k}\int_0^{\frac{l_0}{3}t^{-\frac{1}{2}}}e^{-x_{\alpha}^2}u^v_l(t^{\frac{1}{2}}x_{\alpha},-t^{\frac{1}{2}}x_{\alpha})t^{l} dx_{\alpha} +O(t^k)\\
=& \frac{2}{\sqrt{\pi }}\sum_{\alpha \sim v}\sigma_v^{\alpha \alpha} \int_0^{\frac{l_0}{3}t^{-\frac{1}{2}}}e^{-x_{\alpha}^2}dx_{\alpha}\\
 & +\frac{2}{\sqrt{\pi }}\sum_{\alpha \sim v}\sigma_v^{\alpha \alpha}t \int_0^{\frac{l_0}{3}t^{-\frac{1}{2}}}e^{-x_{\alpha}^2}\left(U(v)+\frac{1}{6}U''(v)tx_{\alpha}^2+\frac{1}{5!}U^{(4)}(v)t^2x_{\alpha}^4\right)dx_{\alpha}\\
 &+ \frac{2}{\sqrt{\pi }}\sum_{\alpha \sim v}\sigma_v^{\alpha \alpha} t^2\int_0^{\frac{l_0}{3}t^{-\frac{1}{2}}}e^{-x_{\alpha}^2}\left(\frac{1}{2}U(v)^2+\frac{1}{6}U''(v)+\frac{1}{60}U^{(4)}(v)tx_{\alpha}^2 + \frac{1}{6}U(v)U''(v)tx_{\alpha}^2\right)dx_{\alpha}\\
&+ \frac{2}{\sqrt{\pi }}\sum_{\alpha \sim v}\sigma_v^{\alpha \alpha} t^3\int_0^{\frac{l_0}{3}t^{-\frac{1}{2}}}e^{-x_{\alpha}^2}\left(\frac{1}{60}U^{(4)}(v)+\frac{1}{6}U''(v)U(v)+\frac{1}{6}U(v)^3\right)dx_{\alpha} + O(t^4)\\
=&\sum_{\alpha \sim v}\sigma_v^{\alpha \alpha}\left(\begin{array}{c c}1+tU(v)+t^2\left(\frac{U''(v)}{4}+\frac{U(v)^2}{2}\right)\\
 + t^3\left(\frac{1}{32}U^{(4)}(v)+\frac{1}{4}U(v)U''(v)+\frac{1}{6}U(v)^3 \right) +  O(t^4) \end{array}\right) 
\end{align*}
Here we used 
\begin{align*}
 \int_0^{\frac{l_0}{3}t^{-\frac{1}{2}}}e^{-x^2}dx =& \frac{\sqrt{\pi}}{2}+O(t^{\infty})\\
 \int_0^{\frac{l_0}{3}t^{-\frac{1}{2}}}e^{-x^2}x^2dx =& \frac{\sqrt{\pi}}{4}+O(t^{\infty})\\
 \int_0^{\frac{l_0}{3}t^{-\frac{1}{2}}}e^{-x^2}x^4dx =& \frac{3}{8}\sqrt{\pi}+O(t^{\infty})
\end{align*}

\end{proof}

\begin{remark}
One can compare the coefficients computed in theorem \ref{coefficients_heat_trace} with results for manifolds in \cite{Gilkey79} on the unit interval and the coefficients match. The manifold expansion contains boundary coefficients at half powers of $t$ but these vanish in our setting because of the assumptions we made on the potential.  
\end{remark}

\section*{Acknowledgement}

This work was made possible through a grant from the DFG, that allowed me to spend a year at Royal Holloway to work on this project. It is a pleasure to thank Jens Bolte for coming up with the topic and multiple fruitful discussions throughout my work on this.

\bibliographystyle{amsalpha}
\addcontentsline{toc}{section}{References}
\bibliography{/home/ralf/personalfiles/phd/literatur}

\end{document}